\crefname{hypothesis}{Hypothesis}{Hypotheses}
\newcommand{\R}{\mathbb{R}}
\newcommand{\C}{\mathbb{C}}
\newcommand{\norm}[1]{\left\lVert#1\right\rVert}
\newcommand{\comment}[1]{}
\title{Leveraging the Hankel norm approximation and block-AAA algorithms in reduced order modeling\thanks{This work is supported by National
Science Foundation grants DMS-1952757 and DMS-2045646.}}
\author{Annan Yu\thanks{Center for Applied Mathematics, Cornell University, Ithaca, 14853, NY, USA (\email{ay262@cornell.edu}).}
\and Alex Townsend\thanks{Department of Mathematics, Cornell University, Ithaca, 14853, NY, USA 
  (\email{townsend@cornell.edu}).}}
\DeclareMathOperator{\diag}{diag}
\DeclareMathOperator{\Iner}{In}
\tikzstyle{io} = [trapezium, 
\tikzstyle{process} = [rectangle, 
\tikzstyle{decision} = [diamond, 
\tikzstyle{arrow} = [thick,->,>=stealth]
\begin{document}

\maketitle

\begin{abstract}
Large-scale linear, time-invariant (LTI) dynamical systems are widely used to characterize complicated physical phenomena. We propose a two-stage algorithm to reduce the order of a large-scale LTI system given samples of its transfer function for a target degree $k$ of the reduced system. In the first stage, a modified adaptive Antoulas--Anderson (AAA) algorithm is used to construct a degree $d$ rational approximation of the transfer function that corresponds to an intermediate system, which can be numerically stably reduced in the second stage using ideas from the theory on Hankel norm approximation (HNA). We also study the numerical issues of Glover's HNA algorithm and provide a remedy for its numerical instabilities. A carefully computed rational approximation of degree $d$ gives us a numerically stable algorithm for reducing an LTI system, which is more efficient than SVD-based algorithms and more accurate than moment-matching algorithms.
\end{abstract}

\begin{keywords}
reduced-order modeling, AAA algorithm, Hankel norm approximation algorithm, LTI system, transfer function, stability
\end{keywords}

\begin{MSCcodes}
41A20, 65D15, 93C05
\end{MSCcodes}

\section{Introduction} Dynamical systems are successful tools in modeling the complicated physics of nature. Among a wide range of dynamical systems, the linear, time-invariant (LTI) dynamical systems comprise one of the most important classes because they are relatively approachable in terms of analysis and simulation. Yet, they are expressive enough to model many physical phenomena. The applications of LTI dynamical systems range from signal and image processing~\cite{diniz2010digital,inouye1998criteria} to electrical and mechanical engineering~\cite{siebert1986circuits,wood1995control}. Unfortunately, while many theories and algorithms exist for LTI systems, a system that accurately models a complex physical phenomenon usually requires many parameters. To reduce the number of parameters, one approximates the large LTI system by a relatively small LTI system while maintaining as much information as possible. Compressing an LTI system is called model reduction and has been studied for decades. The two major concerns in computing reduced-order models (ROMs) are accuracy and efficiency.

In this paper, we propose to combine the fast-but-less-accurate block-adaptive Antoulas--Anderson (block-AAA) algorithm~\cite{nakatsukasa2018AAA} with the accurate-but-less-efficient Hankel norm approximation (HNA) algorithm~\cite{glover1984optimal}. Fixing a degree of approximation, we do not know an error bound for the AAA algorithm. Hence, we keep running the block-AAA algorithm until it yields an accurate approximation. Then, we use the HNA algorithm, whose a priori error bound is known, to reduce the approximation to the desired degree. Our new algorithm outperforms the block-AAA algorithm in accuracy and beats the HNA algorithm in computational time. To combine the two algorithms in a numerically stable way, we need to regularize the block-AAA algorithm. Moreover, we deal with some numerical issues in the HNA algorithm to make the reduced system numerically robust.

More formally, a continuous-time LTI dynamical system is given by~\cite{antoulas2005approximation,zhou1998essentials}
\begin{equation}\label{eq.LTIDS}
	\begin{aligned}
		x'(t) &= Ax(t) + Bu(t), \\
		y(t) &= Cx(t) + Du(t),
	\end{aligned}
	\qquad t \geq 0,
\end{equation}
where $A \in \C^{n \times n}, B \in \C^{n \times m}, C \in \C^{p \times n}$, $D \in \C^{p \times m}$, and $n,m,p$ are fixed positive integers. Here, $u(t) \in \C^m, x(t) \in \C^n$, and $y(t) \in \C^p$ are the vectors of inputs, states, and outputs, respectively. The system is called stable if the spectrum of $A$ is contained in the open left half-plane, i.e., $\text{Re}(z) < 0$ for all $z \in \sigma(A)$. Associated with the LTI system $(A,B,C,D)$ is the so-called transfer function defined by
\begin{equation}\label{eq.transfer}
	G(z) = D + C(zI - A)^{-1}B,
\end{equation}
where we used $I$ for an identity matrix. The transfer function in~\cref{eq.transfer} maps the inputs of the system to the outputs in the Laplace domain by multiplication if $x(0) = 0$, i.e.,
\begin{equation}\label{eq.transferlaplace}
	(\mathcal{L}y)(z) = G(z) (\mathcal{L}u)(z),
\end{equation}
where $\mathcal{L}$ is the Laplace transform operator~\cite{zhou1998essentials}. The task of model reduction is to find a reduced LTI system $(\hat{A},\hat{B},\hat{C},\hat{D})$ with the transfer function $\hat{G}$, where $\hat{A} \in \C^{k \times k}$ for some $k \ll n$, that well-approximates the original system. The notion of a good approximation has a couple of different interpretations, among which the most widely used measurements are the $H^2$, $H^\infty$, and Hankel error of the transfer functions~\cite{debruyne1995relating,glover1984optimal}, all of which measure the difference between $G$ and $\hat{G}$ in a (semi)norm. This makes function approximation an important theme in most model reduction algorithms.

Given a set of samples $\{(z_i,G(z_i))\}_{i=1}^N$ of the transfer function corresponding to the system in~\cref{eq.LTIDS}, we propose an algorithm for finding a stable reduced-order model $(\hat{A},\hat{B},\hat{C},\hat{D})$ of degree $k \ll n$. The algorithm, summarized in~\Cref{fig:flowchart}, has two stages (see~\Cref{fig:flowchart}):
\begin{itemize}
\item \textbf{Stage I:} We use the block-AAA algorithm~\cite{nakatsukasa2018AAA} to find a degree-$d$ rational approximation $R_d$ of the transfer function $G$ and extract a stable intermediate LTI system $(\tilde{A},\tilde{B},\tilde{C},\tilde{D})$ corresponding to the transfer function $R_d$.
\item \textbf{Stage II:} Given the intermediate system $(\tilde{A},\tilde{B},\tilde{C},\tilde{D})$ from stage I, we apply the HNA algorithm~\cite{glover1984optimal} to find the final reduced system $(\hat{A},\hat{B},\hat{C},\hat{D})$.
\end{itemize}
Model reduction algorithms based on approximating the transfer function $G$ are called moment-matching algorithms. Typical examples are the Krylov subspace methods based on the Pad{\'e} approximation~\cite{grimme1997krylov,jaimoukha1997implicitly}, the Loewner framework~\cite{antoulas2017tutorial,peherstorfer2016data}, the Astolfi framework~\cite{astolfi2010model,ionescu2014families}, and the AAA algorithm~\cite{gosea2021algorithms,lietaert2022automatic,nakatsukasa2018AAA} that we use in our first stage.  On the other hand, the HNA algorithm that we use in our second stage, along with several different popular model reduction algorithms such as balanced truncation~\cite{dullerud2013course,moore1981principal} and singular perturbation approximation~\cite{kokotovic1976singular,liu1989singular}, are based on computing the SVD of a square matrix of size $n \times n$. Such methods are referred to as SVD-based algorithms.

In our two-stage algorithm, the degree $d$ is a parameter we can choose. We consider the two extreme cases to understand the effect of $d$ and the benefits of the two-stage algorithm. For ease of exposition, we assume the system is single-in, single-out (SISO), i.e., $m = p = 1$. On the one hand, by setting $d = n$, we may exactly reproduce $(A,B,C,D)$ in our first stage, and therefore the algorithm reduces to applying the HNA algorithm on $(A,B,C,D)$. In this case, we are guaranteed to obtain an accurate reduced system~\cite{glover1984optimal}, but the computational cost is often prohibitive, requiring $\mathcal{O}(n^3)$ operations~\cite{antoulas2000survey}. On the other hand, setting $d = k$ allows us to complete the model reduction task within the first stage using only $\mathcal{O}(Nk^3)$ operations~\cite{nakatsukasa2018AAA}. However, the error made by this reduced system is entirely controlled by the approximation error of $G - R_k$, which is not guaranteed to be optimal. Our two-stage algorithm displays both accuracy and efficiency by allowing us to choose the degree $d$ adaptively so that we have control on $\norm{G-R_d}_\infty$ (see~\cref{sec:AAK} for the definition of $\norm{\cdot}_\infty$), leaving the task of further reducing the system to the desired rank $k$ to the HNA algorithm, requiring $\mathcal{O}(d^3)$ operations. 

While any combination of a moment-matching algorithm and an SVD-based algorithm leads to a two-stage algorithm, we advocate for using the block-AAA and HNA algorithms. Such a combination allows us to adapt our algorithm to model reduction tasks that use different measurements of accuracy (see~\cref{sec:adaptation}), enhancing our algorithm's flexibility. However, both algorithms need to be modified to be numerically stable. For the output of the AAA algorithm to be useful for the HNA algorithm, we must find a system realization of the rational approximation. Whereas the system generated by the canonical block-AAA algorithm~\cite{aumann2023practical,gosea2021algorithms} causes numerical issues in the second stage (see~\cref{sec:stableAAA}), we propose a modification of the block-AAA algorithm that sacrifices some accuracy for numerical stability. In addition, the HNA formulas proposed in~\cite{glover1984optimal} are algebraic and can lead to an ill-conditioned output system $(\hat{A},\hat{B},\hat{C},\hat{D})$ (see~\cref{sec:stabilizetheta}). To promote the robustness of our two-stage algorithm, we also modify the HNA algorithm and provide a careful analysis to show that our modification resolves numerical issues.

The paper is organized as follows. In~\cref{sec:prelim}, we review background information and introduce notation. In~\cref{sec:algorithm}, we describe our two-stage algorithm for model reduction problems in the Hankel norm, followed by a careful analysis of the algorithm in~\cref{sec:analysis}. We adapt the algorithm to the model reduction problem in the $H^\infty$ norm in~\cref{sec:adaptation} and discuss the choice of our algorithm's parameters in~\cref{sec:parameters}. Finally, in~\cref{sec:expdis}, we conclude with numerical experiments on real-world datasets.

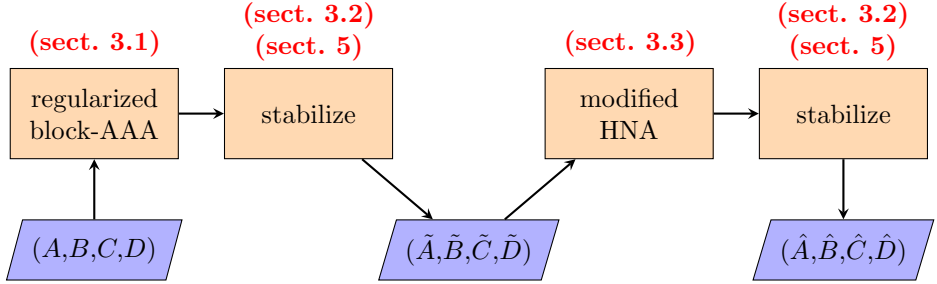
\begin{figure}
\tikzset{node distance = 0.8cm and 0.6cm}
\begin{tikzpicture}

\node (in1) [io] { $(A,\!B,\!C,\!D)$};
\node (pro1) [process, above= of in1] { regularized block-AAA};
\node (pro2) [process, right= of pro1] {stabilize};
\node (in2) [io, below right= of pro2] {$(\tilde{A},\!\tilde{B},\!\tilde{C},\!\tilde{D})$};
\node (pro3) [process, above right= of in2] {modified HNA};
\node (pro4) [process, right= of pro3] {stabilize};
\node (in3) [io, below= of pro4] {$(\hat{A},\!\hat{B},\!\hat{C},\!\hat{D})$};

\node at (0,2.75) 
    {\textcolor{red}{\textbf{(sect.~\ref{sec:stableAAA})}}};
    
\node at (2.85,3.12) 
    {\textcolor{red}{\textbf{(sect.~\ref{sec:stabilize})}}};
    
\node at (2.85,2.7) 
    {\textcolor{red}{\textbf{(sect.~\ref{sec:adaptation})}}};
    
\node at (7.12,2.75) 
    {\textcolor{red}{\textbf{(sect.~\ref{sec:stabilizetheta})}}};
    
\node at (9.95,3.12) 
    {\textcolor{red}{\textbf{(sect.~\ref{sec:stabilize})}}};
    
\node at (9.95,2.7) 
    {\textcolor{red}{\textbf{(sect.~\ref{sec:adaptation})}}};

\draw [arrow] (in1) -- (pro1);
\draw [arrow] (pro1) -- (pro2);
\draw [arrow] (pro2) -- (in2);
\draw [arrow] (in2) -- (pro3);
\draw [arrow] (pro3) -- (pro4);
\draw [arrow] (pro4) -- (in3);


\end{tikzpicture}

\caption{A flowchart of our two-stage algorithm.}
\label{fig:flowchart}
\end{figure}

\section{Preliminaries}\label{sec:prelim}

In this section, we review some background information.

\subsection{The Lyapunov equations, controllability, and observability}

Let $(A,B,C,D)$ be a stable LTI system defined in~\cref{eq.LTIDS}. Let $P$ and $Q$ be Hermitian, positive semi-definite matrices that satisfy the following Lyapunov equations:
\begin{equation}\label{eq.lyapunov}
	AP + PA^* + BB^* = 0, \qquad A^*Q + QA + C^*C = 0.
\end{equation}
The matrices $P$ and $Q$ can be explicitly expressed by the following matrix integrals:
\begin{equation}\label{eq.PQ}
	\begin{aligned}
		P = \int_0^\infty \text{exp}(At) BB^* \text{exp}(A^* t) dt, \qquad Q = \int_0^\infty \text{exp}(A^*t) C^*C \text{exp}(A t) dt.
	\end{aligned}
\end{equation}
The Lyapunov equations are important in understanding the controllability and observability of the LTI system. In particular, the matrix $P$ is called the controllability Gramian and is positive definite if and only if the system~\cref{eq.LTIDS} is controllable. That is, for any $x_0, x_1 \in \C^n$ and any $T > 0$, there exists an input $u$ on $[0,T]$ that makes $x(T) = x_1$ when $x(0) = x_0$. Likewise, $Q$ is called the observability Gramian and is positive definite if and only if the system~\cref{eq.LTIDS} is observable. That is, for any $T > 0$, the initial state $x(0)$ can be determined by the input $u$ and the output $y$ on $[0,T]$~\cite{zhou1998essentials}. The Lyapunov equations motivate SVD-based model reduction algorithms and are important in analyzing our modified HNA algorithm (see~\cref{sec:analysis}).

\subsection{The Hankel operator} Associated with the matrices $A$, $B$, and $C$ in~\cref{eq.lyapunov} is the Hankel operator $\Gamma_{(A,B,C)}: L^2((0,\infty),\C^{m}) \rightarrow L^2((0,\infty), \C^{p})$ defined by
\begin{equation}\label{eq.hankelop}
	(\Gamma_{(A,B,C)} f)(t) = \int_0^\infty C \text{exp}(A(t+s)) Bf(s) ds.
\end{equation}
The Hankel operator maps the past inputs to the future outputs with the initial state $x(0)$~\cite{glover1984optimal}. To see this, assume $u(t)$, $x(t)$, and $y(t)$ form a solution to the system~\cref{eq.LTIDS}, where $u(t) = 0$ for $t \geq 0$ (i.e., no inputs are given in the future). Define $v(t) = u(-t)$ as the inputs given backwards in time. Then, we have $(\Gamma_{(A,B,C)} v)(t) = y(t)$ for all $t \geq 0$. Motivated by this physical interpretation, we propose the following problem of model reduction in the Hankel norm.
\begin{problem}\label{prob.MRHankel}
Find a stable reduced system $(\hat{A},\hat{B},\hat{C},\hat{D})$ to~\cref{eq.LTIDS} with the transfer function $\hat{G}$, where $\hat{A} \in \C^{k \times k}, \hat{B} \in \C^{k \times m}, \hat{C} \in \C^{p \times k}$, and $\hat{D} \in \C^{p \times m}$ that minimizes the operator norm $\big\|{\Gamma_{({A},{B},{C})} - \Gamma_{(\hat{A},\hat{B},\hat{C})}}\big\|_{L^2 \rightarrow L^2}$.
\end{problem}
The fact that the Hankel operators are close to each other means that the systems on the same past inputs give close future outputs. To show that the problem is well-posed, we note that the Hankel operator $\Gamma_{(A,B,C)}$ is a bounded linear operator between two separable Hilbert spaces. It has a finite rank of $\leq n$, i.e., it can be represented as 
\[
	\Gamma_{(A,B,C)} f = \sum_{j=1}^n \sigma_j \langle f, \phi_j \rangle_{L^2} \psi_j, \qquad f \in L^2((0,\infty),\C^m),
\]
where $\{\phi_j\}_{j=1}^n$ and $\{\psi_j\}_{j=1}^n$ are orthonormal sets in $L^2((0,\infty),\!\C^m)$ and $L^2((0,\infty),\!\C^p)$, respectively, and $\sigma_1 \geq \cdots \geq \sigma_n \geq 0$ are called the Hankel singular values of $\Gamma_{(A,B,C)}$. It turns out that $\sigma_1, \ldots, \sigma_n$ are exactly the singular values of $PQ$, where $P$ and $Q$ are given in~\cref{eq.PQ}. As we show next, the Hankel singular values are related to the theory of low-rank Hankel approximations of the Hankel operator.

\subsection{The Adamyan--Arov--Krein theory}\label{sec:AAK}

Given the Hankel operator $\Gamma_{\!(A,B,C)}$ defined in~\cref{eq.hankelop}, for any $0 \leq k \leq n-1$, we have $\min_{\Gamma} \norm{\Gamma_{(A,B,C)} - \Gamma}_{L^2 \rightarrow L^2} = \sigma_{k+1}$, where $\Gamma$ ranges over all bounded linear operators of rank $\leq k$. However, this does not solve Problem~\ref{prob.MRHankel} because the optimal $\Gamma$ may not necessarily be a Hankel operator. Adamyan, Arov, and Krein studied optimal Hankel approximation in the 1960s and 70s, and their main results are commonly referred to as AAK theory~\cite{AAK2,AAK1}. To introduce AAK theory, we define the following norm, denoted by $\norm{\cdot}_\infty$, for matrix-valued functions on the imaginary axis:
\begin{equation}\label{eq.Hinfnorm}
	\norm{f}_\infty = \text{ess\,sup}_{\text{Re}(z) = 0} \norm{f(z)}_2,
\end{equation}
where $\norm{\cdot}_2$ is the matrix $2$-norm. Let $H^\infty$ be the Hardy space that contains all functions $h\!: \C \!\rightarrow\! \C^{p \times m}$ that are analytic in the open left half-plane and whose non-tangential limits on the imaginary axis are bounded in the $\norm{\cdot}_\infty$ norm. AAK theory states the following:

\begin{theorem}[Adamyan--Arov--Krein]\label{thm.AAK} 
Using the notation defined in this section, for any fixed $0 \leq k \leq n-1$, there exists $\hat{A} \in \C^{k \times k}$, $\hat{B} \in \C^{k \times m}$, $\hat{C} \in \C^{p \times k}$, and $\hat{D} \in \C^{p \times m}$ such that the transfer function of the reduced system $\hat{G}(z) = \hat{D} + \hat{C}(zI - \hat{A})^{-1} \hat{B}$ satisfies
\begin{equation}
	\sigma_{k+1} = \inf_{F \in H^\infty} \big\|{G - \hat{G} - F}\big\|_{\infty},
\end{equation}
where we take $\hat{G}(z) \equiv 0$ when $k = 0$.
\end{theorem}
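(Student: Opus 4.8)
The plan is to recast the statement in terms of Hankel operators, where $\inf_{F\in H^\infty}\norm{G-\hat G-F}_\infty$ is the distance from the symbol $G-\hat G$ to $H^\infty$, and then to prove two matching inequalities: a lower bound $\inf_{F}\norm{G-\hat G-F}_\infty\ge\sigma_{k+1}$ valid for \emph{every} degree-$k$ stable $\hat G$, and the construction of one particular $\hat G$ (with an accompanying optimal $F$) that attains $\sigma_{k+1}$. The two ingredients that make this work are that a symbol can only have larger $\norm{\cdot}_\infty$ than the norm of the Hankel operator it induces, and that a degree-$k$ stable system induces a Hankel operator of rank at most $k$.

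For the lower bound I would invoke the standard correspondence between a symbol $\Phi\in L^\infty(i\R)$ and its Hankel operator $\Gamma_\Phi$, using only two of its properties: $\norm{\Gamma_\Phi}_{L^2\to L^2}\le\norm{\Phi}_\infty$ (a Hankel operator is a compression of the multiplication operator by $\Phi$), and $\Gamma_{\Phi+F}=\Gamma_\Phi$ for $F\in H^\infty$ (the analytic part of the symbol is invisible to the Hankel operator). Taking $\Phi=G-\hat G$ gives $\Gamma_{G-\hat G-F}=\Gamma_{(A,B,C)}-\Gamma_{(\hat A,\hat B,\hat C)}$, and because $\hat A\in\C^{k\times k}$ the subtracted operator $\Gamma_{(\hat A,\hat B,\hat C)}$ has rank at most $k$. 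Combining this with the Eckart--Young characterization of $\sigma_{k+1}$ as the distance from $\Gamma_{(A,B,C)}$ to the rank-$k$ operators yields
\[
\norm{G-\hat G-F}_\infty\ge\norm{\Gamma_{(A,B,C)}-\Gamma_{(\hat A,\hat B,\hat C)}}_{L^2\to L^2}\ge\sigma_{k+1}
\]
for all admissible $\hat G$ and $F$, which is the desired lower bound (and, specialized to $k=0$, recovers Nehari's theorem $\sigma_1=\inf_{F}\norm{G-F}_\infty$).

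For achievability I would construct the optimal error from a Schmidt pair of $\Gamma_{(A,B,C)}$ at level $\sigma_{k+1}$: pick $\phi,\psi$ with $\Gamma_{(A,B,C)}\phi=\sigma_{k+1}\psi$ and $\Gamma_{(A,B,C)}^*\psi=\sigma_{k+1}\phi$, pass to the frequency domain, and form the candidate error symbol $E=\sigma_{k+1}\,\psi/\phi$ (interpreted through an inner--outer factorization in the matrix-valued case). The two facts to check are that $E$ is all-pass of modulus $\sigma_{k+1}$ on $i\R$, so that $\norm{E}_\infty=\sigma_{k+1}$, and that $G-E$ decomposes as $\hat G+F$ with $F\in H^\infty$ and $\hat G$ rational with all poles in the open left half-plane; a minimal realization of $\hat G$ then provides $(\hat A,\hat B,\hat C,\hat D)$ with $\hat A\in\C^{k\times k}$, and $\norm{G-\hat G-F}_\infty=\norm{E}_\infty=\sigma_{k+1}$ gives the matching upper bound. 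I expect the main obstacle to be the degree count: showing that $G-E$ has \emph{exactly} $k$ poles in the open left half-plane, equivalently that $\phi$ has exactly $k$ zeros there, which is what forces $\hat A$ to be $k\times k$ and $\hat G$ stable. Establishing this requires an argument-principle/winding-number analysis of the inner factor of $\phi$ together with the symmetry relating $\phi$ and $\psi$ through the anti-stable structure of $G$; this is the technical core of AAK theory, and the matrix case ($m,p>1$) compounds it by requiring matrix inner--outer factorizations rather than a scalar ratio.
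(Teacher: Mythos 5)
The paper does not prove this statement at all---it is quoted as a known result with the proof deferred to \cite[Thm.~0.1]{AAK1}---so there is no in-paper argument to compare against; what can be assessed is whether your outline would actually constitute a proof. The lower-bound half is sound and essentially complete: $\norm{\Gamma_\Phi}\le\norm{\Phi}_\infty$, the invariance $\Gamma_{\Phi+F}=\Gamma_\Phi$ for $F\in H^\infty$, the rank bound $\operatorname{rank}\Gamma_{(\hat A,\hat B,\hat C)}\le k$ for $\hat A\in\C^{k\times k}$, and the Schmidt/Eckart--Young characterization of $\sigma_{k+1}$ together give $\norm{G-\hat G-F}_\infty\ge\sigma_{k+1}$ for every admissible pair $(\hat G,F)$. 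That direction I would accept as written.

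The achievability half, however, is a program rather than a proof, and the gap you flag yourself is the entire content of the theorem. Two specific points. First, the degree count---that the Schmidt vector $\phi$ at level $\sigma_{k+1}$ has exactly $k$ zeros in the relevant half-plane, so that $G-E$ splits as a stable degree-$k$ part plus an $H^\infty$ part---is not a routine argument-principle computation one can wave at; it is the delicate step of \cite{AAK1} (one shows the count is at most $k$ by a minimality/interpolation argument and at least $k$ by the lower bound already proved), and without it you cannot even assert $\hat A\in\C^{k\times k}$. Second, the construction $E=\sigma_{k+1}\psi/\phi$ is intrinsically scalar. In the matrix-valued setting ($m,p>1$), which is the generality the theorem is stated in, the optimal error is in general not an all-pass (constant-modulus) function, and the parenthetical appeal to ``inner--outer factorization in the matrix-valued case'' does not rescue the ratio construction; the standard matrix proofs proceed by genuinely different means (one-step extensions via Parrott's lemma, or Glover's state-space all-pass dilation---which is in fact the machinery this paper uses later in \cref{sec:BRHNA}). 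So the proposal correctly identifies the classical route and completes the easy inequality, but the hard inequality is asserted, not established.
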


\begin{proof}
See~\cite[Thm.~0.1]{AAK1}.
\end{proof}

In particular, applying~\Cref{thm.AAK} with $k=0$, we have that
\[
	\sigma_1 = \norm{\Gamma_{(A,B,C)}}_{L^2 \rightarrow L^2} = \inf_{F \in H^\infty} \norm{G - F}_{\infty},
\]
i.e., the norm of a Hankel operator is equivalent to the $\norm{\cdot}_\infty$ distance of its transfer function to the Hardy space $H^\infty$. If we identify the transfer function $G$ with the class of functions $\{G \!+\! F \mid F \!\in\! H^\infty\}$, which is an equivalence class under the equivalence relation $G_1 \!\sim\! G_2 \Leftrightarrow G_1\!-\!G_2 \!\in\! H^\infty$, we can define a norm on the equivalence classes:
\begin{equation}\label{eq.hankelnorm}
	\norm{G}_H = \inf_{F \in H^\infty} \norm{G-F}_\infty.
\end{equation}
The norm in~\cref{eq.hankelnorm} is equal to the operator norm of $\Gamma_{(A,B,C)}$. Hence, the goal of Problem~\ref{prob.MRHankel} is equivalent to minimizing $\big\|{G - \hat{G}}\big\|_H$. We can assume, without loss of generality, that $p \leq m$; otherwise, we can solve the model reduction problem on the dual system $(A^*,C^*,B^*,D^*)$ with the transfer function $G^*$ and obtain the reduced dual system $(\hat{A}^*,\hat{C}^*,\hat{B}^*,\hat{D}^*)$ with the transfer function $\hat{G}^*$ with $\big\|{{G} - \hat{G}}\big\|_H = \big\|{G^* - \hat{G}^*}\big\|_H$. As the degree of the rational approximation $d$ in our two-stage algorithm increases, the Hankel error of the reduced system, i.e., $\big\|{G-\hat{G}}\big\|_H$, approaches $\sigma_{k+1}$, which is the theoretically optimal Hankel error bound of a rank-$k$ Hankel approximation.

\subsection{The adaptive Antoulas--Anderson (AAA) algorithm} The AAA algorithm was first proposed in~\cite{nakatsukasa2018AAA}. It provides a fast and empirically robust way to construct rational approximations of a scalar-valued function. The algorithm was later generalized to matrix-valued functions~\cite{gosea2021algorithms,lietaert2022automatic}. Given a set of samples $\{(z_i,G(z_i))\}_{i=1}^N$ of $G: \Omega \rightarrow \C^{p \times m}$ on a domain $\Omega \subset \C$, the block-AAA algorithm~\cite{gosea2021algorithms} computes rational approximations $R_0, R_1, \ldots$ adaptively, where $R_0(z) = 0$ for all $z \in \C$ and\footnote{The original barycentric formula proposed in~\cite{gosea2021algorithms} is slightly different. The form in~\cref{eq.bary1} is suggested in~\cite{aumann2023practical} to enable one to find a system realization of $R_d$ (see~\cref{sec:stableAAA}).}
\begin{equation}\label{eq.bary1}
	R_d(z) = \left(I + \sum_{j=1}^d \frac{W^{(d)}_j}{z-z_{i_j}} \right)^{-1} \left(\sum_{j=1}^d \frac{W^{(d)}_jG(z_{i_j})}{z-z_{i_j}}\right), \qquad d \geq 1,
\end{equation}
where $W_{j}^{(d)} \in \C^{p \times p}$ for $1 \leq j \leq d$ are weight matrices. It is comprised of two ideas. First, given $R_{d-1}$, we choose the next support point $z_{i_{d}} \in \{z_i\}_{i=1}^N$ to be at a location that maximizes $\norm{R_{d-1}(z_i) - G(z_i)}_F$ for $1 \leq i \leq N$. To greedily make $R_{d}$ approximate well at $z_{i_d}$, we compute $W^{(d)}_j$ in~\cref{eq.bary1} by solving a least-squares problem
\begin{equation}\label{eq.lsqblockAAA}
	\underbrace{\begin{bmatrix}
		W_1^{(d)}\! & \!\!\!\cdots\!\!\! & \!W_d^{(d)}\!
	\end{bmatrix}}_{W^{(d)}}\!
	\underbrace{
	\begin{bmatrix}
		\frac{G(z_{k_1}) - G(z_{i_1})}{z_{k_1} - z_{i_1}}\! & \!\!\!\cdots\!\!\! & \!\frac{G(z_{k_{N-d}}) - G(z_{i_1})}{z_{k_{N-d}} - z_{i_1}} \\
		\vdots & \!\!\!\ddots\!\!\! & \vdots \\
		\frac{G(z_{k_1}) - G(z_{i_d})}{z_{k_1} - z_{i_d}}\! & \!\!\!\cdots \!\!\!& \!\frac{G(z_{k_{N-d}}) - G(z_{i_d})}{z_{k_{N-d}} - z_{i_d}}
	\end{bmatrix}
	}_{M^{(d)}}
	=
	\underbrace{
	\begin{bmatrix}
		-G(z_{k_1})\! & \!\!\!\cdots\!\!\! & \!-G(z_{k_{N-d}})\!
	\end{bmatrix}
	}_{G^{(d)}},
\end{equation}
where $\{k_1, \ldots, k_{N-d}\} = [N] \setminus \{i_1, \ldots, i_d\}$. As long as the weights $W^{(d)}_j$ are non-singular, $R_{d}$ interpolates $G$ at $z_{i_{d}}$. The block-AAA algorithm is important in our algorithmic design because it serves as an efficient way to reduce the system to a rank that is not so large that the (modified) HNA algorithm in our second stage is computationally prohibitive, and not so small that the approximation accuracy of the intermediate system is poor. We show, in particular, that when using our two-stage algorithm to solve Problem~\ref{prob.MRHankel}, the value of $\big\|{G - \hat{G}}\big\|_H - \sigma_{k+1}$ is roughly bounded by twice the block-AAA approximation error given by $\norm{G - R_d}_\infty$ (see~\cref{sec:analysis}).

\subsection{Balanced realizations and Hankel norm approximations}\label{sec:BRHNA}
Given a stable system $(A,B,C,D)$ and assume that for some $r \geq 1$, its Hankel singular values satisfy ${\sigma}_1 \geq \cdots \geq {\sigma}_{k} > {\sigma}_{k+1} = \cdots = {\sigma}_{k+r} > {\sigma}_{k+r+1} \geq \cdots \geq {\sigma}_n$. One can use the SVD to compute a balanced realization $(A_b,B_b,C_b,D_b)$~\cite{laub1987computation} with the associated matrices $P_b = Q_b = \diag(\sigma_1, \ldots, \sigma_{k}, \sigma_{k+r+1}, \ldots, \sigma_n, \sigma_{k+1}, \ldots, \sigma_{k+r})$. The matrices in the balanced realization satisfy the Lyapunov equations given by
\begin{equation*}
	A_bP_b + P_bA_b^* + B_bB_b^* = 0, \qquad A_b^*Q_b + Q_bA_b + C_b^*C_b = 0,
\end{equation*}
such that the transfer function $G_b(z) = D_b + C_b(zI - A_b)^{-1}B_b = G(z)$. Suppose we partition the matrices so that
\begin{equation}\label{eq.decomposeABC}
	{A}_b = \left[
	\begin{array}{c|c}
		\!\!{A}_{11} & {A}_{12}\!\! \\
		\hline
		\!\!{A}_{21} & {A}_{22}\!\!
	\end{array}
	\right], \;
	{B}_b = \left[
	\begin{array}{c}
		\!{B}_{1}\! \\
		\hline
		\!{B}_{2}\!
	\end{array}
	\right], \;
	{C}_b = \left[
	\begin{array}{c|c}
		\!\!{C}_{1} & {C}_{2}\!\!
	\end{array}
	\right], \;
	P_b = 
	\left[
	\begin{array}{c|c}
		{\Sigma}_1 & 0 \\
		\hline
		0 & {\sigma}_{k+1} I_r
	\end{array}
	\right],
\end{equation}
where ${\Sigma}_1 = \diag({\sigma}_1, \ldots, {\sigma}_{k}, {\sigma}_{k+r+1}, \ldots, {\sigma}_n)$, ${A}_{11} \in \C^{(n-r) \times (n-r)}$, ${B}_1 \in \C^{(n-r) \times m}$, and ${C}_1 \in \C^{p \times (n-r)}$. The sizes of the other matrices in~\cref{eq.decomposeABC} can be inferred. Glover proposed the following HNA algorithm~\cite{glover1984optimal}:
\begin{equation}\label{eq.gloveralg}
\begin{aligned}
	\hat{A} &= ({\Sigma}_1^2 - {\sigma}_{k+1}^2 I)^{-1} ({\sigma}_{k+1}^2 {A}_{11}^* + {\Sigma}_1 {A}_{11} \tilde{\Sigma}_1 - {\sigma}_{k+1} {C}_1^* U {B}_1^*), \\
    \hat{B} &= ({\Sigma}_1^2 - {\sigma}_{k+1}^2 I)^{-1} ({\Sigma}_1 {B}_1 + {\sigma}_{k+1} {C}_1^* U), \\
    \hat{C} &= {C}_1 {\Sigma}_1 + {\sigma}_{k+1} U {B}_1^*, \\
    \hat{D} &= {D} - {\sigma}_{k+1}U,
\end{aligned}
\end{equation}
where $U = -C_2 (B_2^*)^\dagger$. We denote~\cref{eq.gloveralg} by $\texttt{HNA}(A_{ij},B_i,C_j,D,{\Sigma}_1,{\sigma}_{k+1},U)$. The system $(\hat{A},\hat{B},\hat{C},\hat{D})$ is unstable so it is not a solution to Problem~\ref{prob.MRHankel}. Nevertheless, the transfer function defined by $\hat{G}(z) = \hat{D} + \hat{C}(zI-\hat{A})^{-1}\hat{B}$ satisfies $\big\|{G - \hat{G}}\big\|_\infty = \big\|{G-\hat{G}}\big\|_H = \sigma_{k+1}$~\cite{glover1984optimal}. Moreover, $\hat{G}$ can be written as the sum of a transfer function $\hat{G}^s$ of a stable rank-$k$ system and an $H^\infty$ function $\hat{G}^u$. By~\Cref{thm.AAK}, we have $\big\|{G-\hat{G}^s}\big\|_H = \big\|{G-\hat{G}}\big\|_H = \sigma_{k+1}$ and thus $\hat{G}^s$ is an optimal solution to Problem~\ref{prob.MRHankel}.

The HNA algorithm extends the so-called suboptimal HNA algorithm~\cite{antoulas2005approximation}, both of which produce a reduced system with $(G(z) - \hat{G}(z))^*(G(z)-\hat{G}(z)) = cI$ for every $z$ on the imaginary axis, where $c$ is a constant close to $\sigma_{k+1}^2$. Such a reduced system is called an all-pass dilation, and it is guaranteed to be equivalent to a rank-$k$ system in the Hankel norm (see~\cref{sec:AAK}). Note that in the HNA algorithm, the condition ${\sigma}_{k} > {\sigma}_{k+1} = \cdots = {\sigma}_{k+r} > {\sigma}_{k+r+1}$ is algebraic in nature and numerical issues arise when $\sigma_{k}-\sigma_{k+1}$ or $\sigma_{k+r} - \sigma_{k+r+1}$ is small (see~\cref{sec:stabilizetheta}). We propose a choice of ${\Sigma}_1$ to remedy the numerical issue. While the reduced system from our modified HNA algorithm is no longer an all-pass dilation, the subsequent analysis shows that the reduced system remains almost optimal with this modification. As a result, the modified HNA algorithm becomes a powerful tool to reduce the intermediate system generated by the block-AAA algorithm to the desired rank.

\section{Our two-stage algorithm for model reduction}\label{sec:algorithm}
This section combines the block-AAA and HNA algorithms to obtain an efficient, robust, and flexible two-stage algorithm for reducing the LTI system in~\cref{eq.LTIDS} given a set of samples on the imaginary axis from the transfer function in~\cref{eq.transfer} (see~\Cref{alg:hankelAAA}). The output of \Cref{alg:hankelAAA} aims to solve Problem~\ref{prob.MRHankel}. Later, in~\cref{sec:adaptation}, we adapt our algorithm to solve a different model reduction problem.

\begin{algorithm}
	\caption{Computing a ROM with the block-AAA algorithm and the HNA algorithm}
	\label{alg:hankelAAA}
	\hspace*{\algorithmicindent} \textbf{Input} A set of samples $\{(z_i,G(z_i))\}_{i=1}^N$ from the transfer function in~\cref{eq.transfer}, where $\text{Re}(z_i) = 0$ for $1 \leq i \leq N$, a target rank $k$, a regularization parameter $\lambda \!\geq\! 0$, a tolerance $\epsilon \!\geq\! 0$, and a threshold $\gamma \!\geq\! 0$.\\\textbf{Stage I: The block-AAA algorithm}
	\begin{algorithmic}[1]
	\STATE\label{state.startAAA} Construct a degree-$d$ rational approximation $R_d$ from $\{(z_i,G(z_i))\}_{i=1}^N$ using the block-AAA algorithm with parameter $\lambda$ and find a system $(\tilde{A},\tilde{B},\tilde{C},\tilde{D})$ of rank $K \leq pd$ whose transfer function is $R_d$ (see~\cref{sec:stableAAA})
	\STATE\label{state.endAAA} Stabilize the system $(\tilde{A},\tilde{B},\tilde{C},\tilde{D})$ (see~\cref{sec:stabilize})
	\end{algorithmic}
	\textbf{Stage II: The modified HNA algorithm}
	\begin{algorithmic}[1]
	\addtocounter{ALC@line}{2}
	\STATE\label{state.startHNA} Compute a balanced realization $(\tilde{A},\tilde{B},\tilde{C},\tilde{D})$ of the system to obtain the Hankel singular values $\tilde{\sigma}_1, \ldots, \tilde{\sigma}_K$, where $K$ is the rank of the system
	\STATE\label{state.decomposemats} Find all Hankel singular values within the distance of $\epsilon$ to $\tilde{\sigma}_{k+1}$ and construct $\tilde{A}_{ij}, \tilde{B}_i, \tilde{C}_j, \tilde{\Sigma}_i$ for $i,j = 1, 2$ and $U$ using $\gamma$ (see~\cref{sec:stabilizetheta})
	\STATE Construct $(\hat{A}, \hat{B}, \hat{C}, \hat{D})$ using $\texttt{HNA}(\tilde{A}_{ij},\tilde{B}_i,\tilde{C}_j,\tilde{D},\tilde{\Sigma}_1,\tilde{\sigma}_{k+1},U)$
	\STATE\label{state.endHNA} Find a stable system of rank $k$ using $(\hat{A}, \hat{B}, \hat{C}, \hat{D})$ (see~\cref{sec:stabilize})
	\end{algorithmic}
\end{algorithm}

\comment{
\begin{algorithm}
	\caption{Construction of $U$}
	\label{alg:constructU}
	\hspace*{\algorithmicindent} \textbf{Input} $\tilde{B}_2, \tilde{C}_2, \gamma$
	\begin{algorithmic}[2]
	\STATE Compute the SVD of $\tilde{C}_2^* = U_CS_CV_C^*$
	\STATE $q \gets \min \{j \mid (S_C)_{j+1,j+1} \leq \gamma\}$
	\STATE $U^{(1)}_C \gets (U_C)_{:,1:q}$, $S^{(1)}_C \gets (S_C)_{1:q,1:q}$ \hfill\COMMENT{separate out small singular values}
	\STATE Find the least-squares solution of $\big(U^{(1)}_C S^{(1)}_C\big) \big(V^{(1)}_B \big)^*= \tilde{B}_2$ 
	\STATE\label{state.append} Construct $V_B \in \R^{p \times m}$ by appending $p-q$ arbitrary orthonormal columns to $V^{(1)}_B$ that are also orthogonal to the column space of $V^{(1)}_B$
	\STATE $U \gets V_CV_B^*$
	\end{algorithmic}
\end{algorithm}
}

\subsection{Finding a stable realization of a rational approximation}\label{sec:stableAAA}

Applying the block-AAA algorithm gives us a rational approximation $R_d$ of $G$ in the form of~\cref{eq.bary1}. To apply the subsequent HNA algorithm to this intermediate system, we must explicitly find the associated LTI system $(\tilde A, \tilde B, \tilde C, \tilde D)$ that corresponds to the transfer function $R_d$. In~\cite{aumann2023practical}, it is shown that $R_d$ can be realized by
\begin{equation*}
	\begin{aligned}
		\tilde A = \diag(z_{i_1}, \ldots, z_{i_d}) \!\otimes\! I_p \!-\! (\mathbbm{1}_d \!\otimes\! I_p)\tilde C, \quad \tilde B = \begin{bmatrix}
			G(z_{i_1})^* & \!\!\cdots\!\! & G(z_{i_d})^*
		\end{bmatrix}^*, \quad \tilde C = W^{(d)}
	\end{aligned},
\end{equation*}
where $W^{(d)}$ is defined in~\cref{eq.lsqblockAAA}. Unfortunately, as $d$ increases, the matrix $M^{(d)}$ in~\cref{eq.lsqblockAAA} becomes ill-conditioned in practice, resulting in very large entries in $W^{(d)}$. Therefore, the first term $\diag(z_{i_1}, \ldots, z_{i_d}) \otimes I_p$ of $\tilde{A}$ has a much smaller order of magnitude than that of the second term $(\mathbbm{1}_d \!\otimes\! I_p)\tilde C$. This makes the numerical computation with $\tilde{A}$ difficult because $\diag(z_{i_1}, \ldots, z_{i_d}) \otimes I_p$ is easily contaminated by numerical errors. To combat this potential failure, we propose to solve the following regularized least-squares problem instead of~\cref{eq.lsqblockAAA}:
\begin{equation}\label{eq.regularizedAAA}
	W^{(d)} = \text{arg\,min}_W \norm{G^{(d)} - WM^{(d)}}_F^2 + \lambda \norm{W}_F^2,
\end{equation}
where $\lambda \geq 0$ is a regularization parameter. When $\lambda = 0$, we reproduce the least-squares problem in~\cref{eq.lsqblockAAA}. As $\lambda$ increases, the magnitudes of entries in $W$ become smaller, making the computation with $\tilde{A}$ more stable, whereas $R_d$ may become a less accurate approximation of $G$. In~\cref{sec:parameters}, we discuss the choice of $\lambda$, and in~\cref{sec:expdis}, we test the effect of the regularization parameter $\lambda$.

\subsection{Stabilizing an LTI system while preserving the Hankel operator}\label{sec:stabilize}

The system $(\tilde A, \tilde B, \tilde C, \tilde D)$ in~\cref{sec:stableAAA} is unstable in general. To apply the modified {HNA} algorithm (see~\cref{sec:stabilizetheta}), we want to work with a stable system. We can stabilize a general LTI system $(A,B,C,D)$ without changing its Hankel operator if $A \in \C^{n \times n}$ is diagonalizable with no purely imaginary eigenvalue.

Suppose we diagonalize ${A} = {X} {\Lambda} {X}^{-1}$, where ${\Lambda} = \diag({\lambda}_1, \ldots, {\lambda}_\iota, {\lambda}_{\iota+1}, \ldots, {\lambda}_{n-r})$ is a diagonal matrix with $\text{Re}({\lambda}_i) < 0$ for $1 \leq i \leq \iota$ and $\text{Re}({\lambda}_i) > 0$ for $i \geq \iota + 1$. If we define ${A}_s = \diag({\lambda}_1, \ldots, {\lambda}_{\iota})$, ${B}_s$ and $B_u$ to be the first $\iota$ and the last $n-r-\iota$ rows of $X^{-1}{B}$, respectively, and ${C}_s$ and $C_u$ to be the first $\iota$ and the last $n-r-\iota$ columns of ${C}X$, respectively, then we have
\begin{equation*}
	\begin{aligned}
	{G}^s(z) &:= {D} + {C}_s(zI - {A}_s)^{-1}{B}_s = {D} + {C}X(zI - {\Lambda})^{-1}X^{-1}{B} \\
	&\qquad- C_u(zI - \diag({\lambda}_{\iota+1},\ldots,{\lambda}_{n-r}))^{-1}B_u.
	\end{aligned}
\end{equation*}
The map $z \mapsto C_u(zI - \diag({\lambda}_{\iota+1},\ldots,{\lambda}_{n-r}))^{-1}B_u$ is in $H^\infty$ because ${\lambda}_{\iota+1}, \ldots, {\lambda}_{n-r}$ are in the right half-plane. Hence, $(A, B, C, D)$ and $(A_s, B_s, C_s, D)$ define the same Hankel operator. We use this stabilization procedure on both the intermediate system $(\tilde{A},\tilde{B},\tilde{C},\tilde{D})$ and the final reduced system $(\hat{A},\hat{B},\hat{C},\hat{D})$.  

\subsection{The modified HNA algorithm}\label{sec:stabilizetheta}

Let $\tilde{\sigma}_1, \ldots, \tilde{\sigma}_K$ be the Hankel singular values of the intermediate system $(\tilde{A},\tilde{B},\tilde{C},\tilde{D})$. The HNA algorithm requires us to separate all the singular values equal to $\tilde{\sigma}_{k+1}$. In practical applications, we rarely encounter two singular values of a system that are the same. Nevertheless, it is common for two singular values to be close to each other. In particular, if $\tilde{\sigma}_{k} - \tilde{\sigma}_{k+1}$ or $\tilde{\sigma}_{k+1} - \tilde{\sigma}_{k+2}$ are small, then the matrix $\tilde{\Sigma}_1^2 - \tilde{\sigma}_{k+1}^2 I$, which appears in the HNA formulas in~\cref{eq.gloveralg}, becomes ill-conditioned. 
While this does not affect the (relative) accuracy of the computed entries of $\hat{A}$ and $\hat{B}$ because $\tilde{\Sigma}_1^2 - \hat{\sigma}^2 I$ is a diagonal matrix, it usually makes $\hat{A}$ and $\hat{B}$ have large entries. Consequently, the operator norm of the Hankel operator of $(\hat{A},\hat{B},\hat{C},\hat{D})$ is much smaller than the magnitudes of the matrix entries. Such a system is numerically challenging in floating point arithmetic because there can be a significant loss of numerical precision due to cancellation errors (see~\cref{sec:expstability} for an example). This numerical issue is also witnessed by the suboptimal HNA algorithm~\cite[sect.~8.4.4]{antoulas2005approximation}.

To resolve this issue, we propose the modified HNA algorithm. Let $\tilde{\sigma}_1 \geq \cdots \geq \tilde{\sigma}_K \geq 0$ be the Hankel singular values of the intermediate system and let $j_1 \leq k+1 
\leq j_2$ be two indices such that $\tilde{\sigma}_{j_1}, \ldots, \tilde{\sigma}_{k+1}, \ldots, \tilde{\sigma}_{j_2}$ are the Hankel singular values $\tilde{\sigma}_j$ such that $|\tilde{\sigma}_j - \tilde{\sigma}_{k+1}| \leq \epsilon$, and let $r = j_2-j_1+1$ be the number of such singular values. Assume the system $(\tilde{A},\tilde{B},\tilde{C},\tilde{D})$ and the Gramians $\tilde P = \tilde Q = {\diag}(\tilde{\sigma}_1, \ldots, \tilde{\sigma}_{j_1-1},\tilde{\sigma}_{j_2+1}, \ldots, \tilde{\sigma}_{K}, \tilde{\sigma}_{j_1}, \ldots, \tilde{\sigma}_{j_2})$ are the balanced realization of ${R}^s_d$ satisfying the Lyapunov equations in~\cref{eq.lyapunov}. We define
\begin{equation}\label{eq.modifiedpartition}
	\tilde{A} \!=\! \left[
	\begin{array}{c|c}
	\!\!\tilde A_{11} & \tilde A_{12}\!\! \\
	\hline
	\!\!\tilde A_{21} & \tilde A_{22}\!\!
	\end{array}
	\right], 
	\quad
	\tilde{B} \!=\! \left[
	\begin{array}{c}
	\!\tilde B_{1}\! \\
	\hline
	\!\tilde B_{2}\!
	\end{array}
	\right], 
	\quad
	\tilde{C} \!=\! \left[
	\begin{array}{c|c}
	\!\!\tilde C_{1} & \tilde C_{2} \!\!
	\end{array}
	\right],
	\quad
	\tilde P \!=\! \tilde Q \!=\! \left[
	\begin{array}{c|c}
	\!\!\tilde \Sigma_1 & 0\!\! \\
	\hline
	\!\!0 & \tilde \Sigma_2\!\!
	\end{array}
	\right],
\end{equation}
where $\tilde{A}_{11} \!\in\! \C^{(K-r) \times (K-r)}$, $\tilde{B}_{1} \!\in\! \C^{(K-r) \times m}$, $\tilde{C}_{1} \!\in\! \C^{p \times (K-r)}$, and $\tilde{\Sigma}_1 \!\in\! \C^{(K-r) \times (K-r)}$. By setting $\hat{\sigma} = \tilde{\sigma}_{k+1}$, we are guaranteed that multiplying $(\tilde{\Sigma}_1^2 - \hat{\sigma}^2I)^{-1}$ to a matrix increases the magnitudes of its entries by at most a factor of $(\hat{\sigma}\epsilon)^{-1}$.

Next, we explain the construction of the matrix $U$ appearing in the modified HNA algorithm (see~\Cref{alg:hankelAAA}). Let $\tilde C_2^* = U_C S_C V_C^*$ be the SVD of $\tilde C_2^*$. We partition $U_C$ and $S_C$ so that
\[
     U_C =
    \left[
    \begin{array}{c|c}
         U_C^{(1)} & U_C^{(2)}
    \end{array}
    \right]
    \in \C^{r \times r}, \quad S_C = 
    \left[
    \begin{array}{c|c}
         S_C^{(1)} & 0 \\
         \hline
         0 & S_C^{(2)}
    \end{array}
    \right]
    \in \C^{r \times p},
\]
where $S_C^{(1)} \in \R^{q \times q}$, $U_C^{(1)} \in \C^{r \times q}$, and $1 \leq q \leq r$ is chosen to be the maximum number such that $\big\|{S_{C}^{(2)}}\big\| \geq \gamma$. Let $(V_B^{(1)})^* \in \R^{q \times m}$ be the least-squares solution of $(U_C^{(1)} S_C^{(1)}) X = -\tilde B_2$, i.e.,
\begin{equation}\label{eq.constructVB}
    (V_B^{(1)})^* \!=\! -\big((S_C^{(1)})^*(U_C^{(1)})^*U_C^{(1)}S_C^{(1)}\big)^{-1} \!(S_C^{(1)})^*(U_C^{(1)})^* \tilde B_2 \!=\! -(S_C^{(1)})^{-1} \!(U_C^{(1)})^* \tilde B_2.
\end{equation}
Then, we construct $V_B \in \R^{p \times m}$ by appending $p-q$ arbitrary orthonormal columns to $V_B^{(1)}$ that are also orthogonal to the column space of $V_B^{(1)}$. We denote the appended $(p-q) \times m$ matrix by $V_B^{(2)}$. Finally, we set $U = V_C V_B^*$. This construction guarantees that both $\|I - UU^*\|$ and $\|\tilde{B}_2 + \tilde{C}_2^*U\|$ are small (see~\Cref{thm.QEQU}), which is crucial to analyzing the modified HNA algorithm (see~\cref{sec:analysis}).

The modified HNA algorithm is almost identical to the original one, except that $\tilde{\Sigma}_2$ is no longer a multiple of the identity matrix. Consequently, Glover's results on the optimal Hankel approximation (see~\cref{sec:BRHNA}) no longer apply. In~\cref{sec:analysis}, we analyze the effect of $\epsilon$ on the rank and the approximation accuracy of the reduced order model to justify our modification.

\subsection{An example: system identification using our two-stage algorithm}\label{sec:exphilbert}

We now present a synthetic example to illustrate our two-stage algorithm (see~\cref{sec:expdis} for applications). Consider the case where we cannot access the matrices $A, B, C$, and $D$ that define the LTI system. Instead, we have a protocol to evaluate the transfer function $G$ at a point along the imaginary axis. The task of system identification/approximation is to find a system $(\hat A, \hat B, \hat C, \hat D)$, possibly with a prescribed rank, whose transfer function is close to $G$ in the $\norm{\cdot}_\infty$ norm or the Hankel norm.

Since our two-stage algorithm requires only a set of samples, it is appropriate for this task. Here, we define  $G(z) = \sum_{j=1}^{500} j^{-1} (1+z)^{-j} (1-z)^j$, which comes from truncating the formal power series that defines the Hilbert operator~\cite{peller2006Hankel}. In this case, the singular values of the associated Hankel operator decay slowly, making the system hard to approximate. We sample $G$ on a grid of over $600$ points $z_i$ placed logarithmically on $[-i,-10^{-4}i]$ and $[10^{-4}i,i]$ and use the two-stage algorithm to compute a rank-$10$ system $(\hat A, \hat B, \hat C, \hat D)$ so that $\hat{G}$ approximates $G$ in the Hankel norm. Given only $\hat{G}$ and $G$, estimating the Hankel error $\big\| \hat{G} - G \big\|_H$ is difficult~\cite{garnett2007bounded}. Hence, we measure the $H^\infty$ error $\big\| \hat{G} - G - \tilde{H} - \hat{H} \big\|_\infty$, where $\tilde{H}$ and $\hat{H}$ are the $H^\infty$ functions coming from stabilizing the intermediate and the final reduced systems, respectively (see~\cref{sec:stabilize}). This $H^\infty$ error is an upper bound of the Hankel error (see~\Cref{thm.AAK}). We apply our two-stage algorithm with different values of $d$, the degree of the rational approximation. As shown in~\Cref{fig:hilbert}, as $d$ increases, the approximation error tends to decrease because the rational approximation tends to be more accurate, and the execution time increases. This shows a trade-off between accuracy and time, which makes our two-stage algorithm flexible: one can select a degree $d$ within the block-AAA algorithm based on a preference for accuracy or efficiency.

\begin{figure}
\centering
\begin{minipage}{.48\textwidth}
\begin{overpic}[width=1\textwidth]{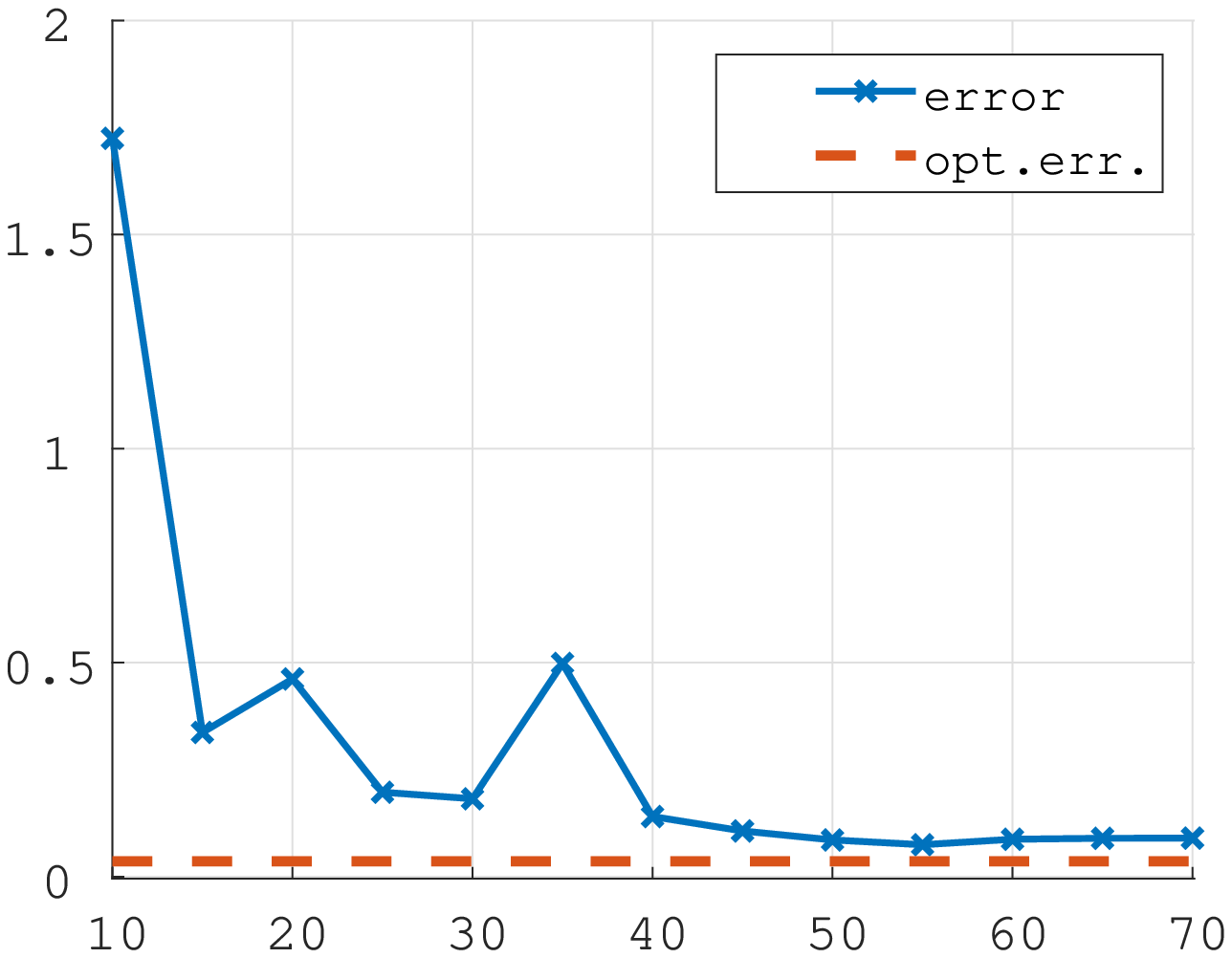}
\put(50,-2) {\footnotesize $d$}
\put(-2,32) {\rotatebox{90}{\footnotesize $\|{\hat{G} - G}\|_H$}}
\end{overpic}
\caption*{\footnotesize (a) Approximation error}
\end{minipage}
\begin{minipage}{.48\textwidth}
\begin{overpic}[width=1\textwidth]{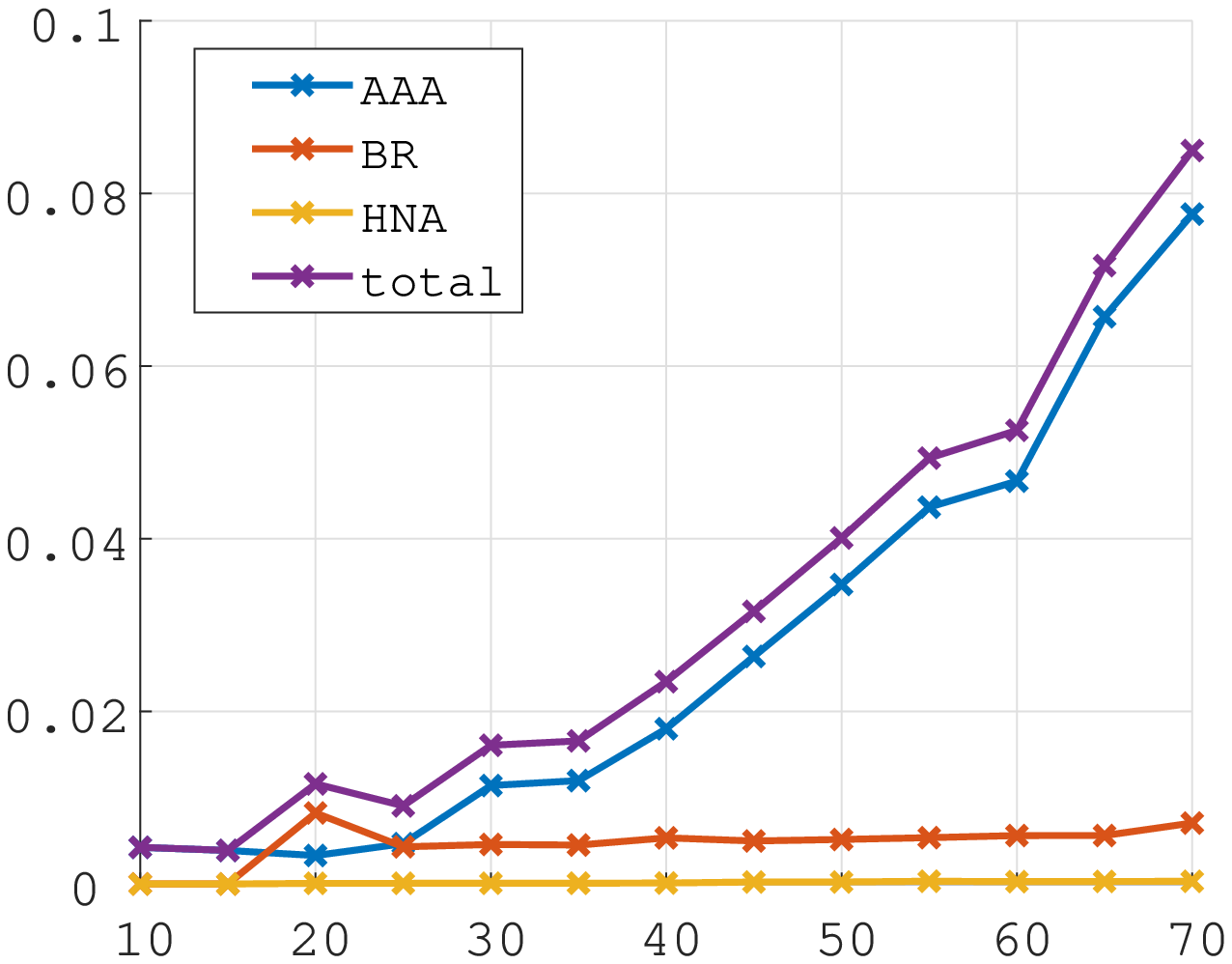}
\put(52,-2) {\footnotesize $d$}
\put(0,37) {\rotatebox{90}{\footnotesize time}}
\end{overpic}
\caption*{\footnotesize (b) Execution time}
\end{minipage}
\caption{Model reduction on the truncated Hilbert example. Left: The approximation error as the degree $d$ increases. Right: The execution time of the block-AAA, balanced realization (BR), and modified HNA algorithms as $d$ increases.}
\label{fig:hilbert}
\end{figure}

\section{Analysis of our two-stage algorithm}\label{sec:analysis}

Let $\hat{G}$ and $\tilde{G}$ be the transfer functions of the final reduced and intermediate systems, respectively. We have
\[
	\big\| \hat{G} - G \big\|_H = \big\| \hat{G} - G \big\|_H \leq \big\| \hat{G} - \tilde{G} \big\|_H + \big\| \tilde{G} - G \big\|_H.
\]
When $\epsilon = 0$ in~\Cref{alg:hankelAAA}, i.e., Glover's original HNA algorithm~\cite{glover1984optimal} is used, we have by Weyl's inequality
\begin{equation}\label{eq.modifiederror}
	\big\| \hat{G} - G \big\|_H \leq \tilde{\sigma}_{k+1} + \big\| \tilde{G} - G \big\|_H \leq \sigma_{k+1} + 2\big\| \tilde{G} - G \big\|_H \leq \sigma_{k+1} + 2\big\| {R}_d - G \big\|_\infty,
\end{equation}
where $\tilde{\sigma}_{k+1}$ and $\sigma_{k+1}$ are the $(k+1)$th Hankel singular value of the Hankel operators associated with $\tilde{G}$ and $G$, respectively.
Since ${\sigma}_{k+1}$ is the optimal Hankel error of a rank-$k$ approximation of $G$ (see~\Cref{thm.AAK}), the Hankel error of $\hat{G}$ is larger than the optimal value by at most twice the rational approximation error $\big\| {R}_d - G \big\|_\infty$. When we select $\epsilon > 0$ (see~\cref{sec:stabilizetheta}), we are no longer guaranteed to have~\cref{eq.modifiederror}. Hence, a careful analysis of the modified HNA algorithm is needed. To do so, we define two non-negative quantities by $Q_E = \norm{\Delta_1}, Q_U = \norm{\Delta_2}$, where
\begin{equation}
	\Delta_1 = \tilde B_2 + \tilde C_2^*U, \qquad \Delta_2 = I - UU^*.
\end{equation}
We first use $Q_E$ and $Q_U$ to study the reduced system's rank and Hankel approximation error. Then, we prove that our construction of $U$ (see~\cref{sec:stabilizetheta}) makes both $Q_E$ and $Q_U$ small. In addition, we define
\[
	\delta = \min \{\tilde{\sigma}_{j_1-1}  - \tilde{\sigma}_{k+1}, \tilde{\sigma}_{k+1} - \tilde{\sigma}_{j_2+1}\},
\]
where we take $\delta = \tilde{\sigma}_{j_1-1}  - \tilde{\sigma}_{k+1}$ if $j_2 = K$. Of course, we have $\delta > \epsilon$ by definition. Moreover, in most cases where we need a positive $\epsilon$ to select singular values that are numerically close to $\tilde{\sigma}_{k+1}$, we expect that $\delta \gg \epsilon$ because $\tilde{\sigma}_{j_1}, \ldots, \tilde{\sigma}_{j_2}$ form a cluster of singular values that is relatively far away from the rest.

\subsection{Rank of the reduced system}

According to the discussion in~\cref{sec:stabilize}, the rank of the reduced system is equivalent to the number of eigenvalues of $\hat{A}$ in the open left half-plane. Using this fact, we study the rank of the reduced system by proving that under mild conditions, $\hat{A}$ has exactly $k$ eigenvalues in the left half-plane and $K-r-k$ in the right half-plane. In the following statement, in which $\sigma_\rho(X)$ denote the $\rho$-pseudospectrum of the matrix $X$~\cite{trefethen1999spectra}, i.e., $\sigma_\rho(X) = \big\{z \in \C \;{\big|}\; \big\| (zI - X)^{-1} \big\| > \rho^{-1} \big\}$.

\begin{theorem}\label{thm.rankGhat}
Using the notation introduced in~\cref{sec:algorithm} and~\cref{sec:analysis}, suppose any one of the following conditions holds:
\begin{enumerate}[label=(\alph*)]
    \item\label{item.a} $\{z \mid \text{Re}(z) = 0\} \cap \sigma_{\rho_1}(\hat{A}) = \emptyset$, where
    \[
        \rho_1 = \big\|{\tilde C_1}\big\|^2 \delta^{-1} Q_U / 2.
    \]
    \item\label{item.b} $\{z \mid \text{Re}(z) = 0\} \cap \sigma_{\rho_2}(\tilde{A}_{11}) = \emptyset$, where
    \[
        \rho_2 = \rho_1 + \big\|{\tilde B_1}\big\|\sqrt{\big\|{\tilde C_1}\big\|^2 Q_U \delta^{-2} + 2\rho_1 \delta^{-1}}.
    \]
    \item\label{item.c} $\{z \mid \text{Re}(z) = 0\} \cap \sigma_{\rho_3}(\tilde{A}) = \emptyset$, where
    \begin{align*}
        \rho_3 &= 2\rho_2 + \delta^{-1} \big(\epsilon\big\|{\tilde A_{21}}\big\| + \epsilon \big\|{\tilde A_{12}}\big\| + Q_E\sqrt{1\!+\!Q_U}\big\|{\tilde C_1}\big\| + \big\|{\tilde C_1}\big\|\big\|{\tilde C_2}\big\|Q_U \big).
    \end{align*}
\end{enumerate}
Then, $\hat{A}$ has exactly $k$ eigenvalues in the left half-plane and $K-r-k$ eigenvalues in the right half-plane. In particular, the rank of the Hankel operator defined by $(\hat{A}_s,\hat{B}_s,\hat{C}_s,\hat{D}_s)$ is equal to $k$.
\end{theorem}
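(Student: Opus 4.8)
The plan is to read the statement as an eigenvalue-counting problem and to settle it by a continuity (homotopy) argument that is guarded by the three pseudospectral hypotheses. Recall that $\hat A\in\C^{(K-r)\times(K-r)}$ and that, by the ordering of the Hankel singular values, the diagonal matrix $\Pi:=\tilde\Sigma_1^2-\hat\sigma^2 I$ (which appears as the inverted factor in~\cref{eq.gloveralg}, with $\hat\sigma=\tilde\sigma_{k+1}$) is nonsingular with exactly $k$ positive and $K-r-k$ negative entries, corresponding to the retained singular values above and below $\hat\sigma$. When $\epsilon=0$ and $U$ is a genuine all-pass factor (so $Q_E=Q_U=0$), the modified formulas reduce to Glover's construction, and AAK theory (\Cref{thm.AAK}) guarantees that the resulting matrix $\hat A_0$ has exactly $k$ eigenvalues in the open left half-plane and $K-r-k$ in the open right half-plane, none on the imaginary axis; equivalently, this is the inertia of $\Pi$. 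Since eigenvalues depend continuously on the matrix entries, it suffices to connect $\hat A_0$ to the actual $\hat A$ by a path along which no eigenvalue ever touches the imaginary axis, for then the two counts coincide.

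First I would prove that condition~\ref{item.a} alone forces this. Writing $\sigma_{\min}(zI-X)$ for the smallest singular value, the hypothesis $\{z:\text{Re}(z)=0\}\cap\sigma_{\rho_1}(\hat A)=\emptyset$ says exactly that $\sigma_{\min}(zI-\hat A)\ge\rho_1$ for every $z$ on the axis, so the axis is already free of eigenvalues of $\hat A$. Substituting the HNA formulas into the balanced Lyapunov equations~\cref{eq.lyapunov} gives a generalized Lyapunov identity $\hat A^*\Pi+\Pi\hat A=-\hat C^*\hat C+E$, whose residual $E$ measures the departure from the clean all-pass case and is governed by the defect $\Delta_2=I-UU^*$; carrying out this computation I would bound the induced perturbation of $\hat A$ (propagated through $\Pi^{-1}$ and $\tilde C_1$) by $\rho_1=\norm{\tilde C_1}^2\delta^{-1}Q_U/2$, the factor $\delta^{-1}$ arising because the nearest retained singular value lies at distance $\ge\delta$ from $\hat\sigma$, bounding $\norm{\Pi^{-1}}$ on the retained block. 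Along the straight path $\hat A_t=(1-t)\hat A_0+t\hat A$ one then has $\sigma_{\min}(zI-\hat A_t)\ge\sigma_{\min}(zI-\hat A)-(1-t)\norm{\hat A-\hat A_0}\ge t\rho_1>0$ for $t\in(0,1]$, while $\hat A_0$ itself has no axis eigenvalue, so no eigenvalue crosses and the count is preserved.

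Next I would discharge~\ref{item.b} and~\ref{item.c} through the monotonicity of pseudospectra under perturbation, $\sigma_\rho(X)\subseteq\sigma_{\rho+\eta}(Y)$ whenever $\norm{X-Y}\le\eta$. For~\ref{item.b}$\Rightarrow$\ref{item.a} I would compare the resolvent of $\hat A$ on the axis with that of $\tilde A_{11}$, using $\hat A=\Pi^{-1}(\hat\sigma^2\tilde A_{11}^*+\tilde\Sigma_1\tilde A_{11}\tilde\Sigma_1-\hat\sigma\tilde C_1^*U\tilde B_1^*)$ together with the $(1,1)$-blocks of the balanced Lyapunov equations, absorbing the difference into the gap $\rho_2-\rho_1=\norm{\tilde B_1}\sqrt{\norm{\tilde C_1}^2Q_U\delta^{-2}+2\rho_1\delta^{-1}}$, so that $\sigma_{\rho_1}(\hat A)\subseteq\sigma_{\rho_2}(\tilde A_{11})$ and emptiness on the axis transfers. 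For~\ref{item.c}$\Rightarrow$\ref{item.b} I would treat $\tilde A_{11}$ as the compression of the balanced $\tilde A$ onto the retained coordinates and bound $\norm{(zI-\tilde A_{11})^{-1}}$ by $\norm{(zI-\tilde A)^{-1}}$ via a Schur-complement estimate, in which the coupling blocks $\tilde A_{12},\tilde A_{21}$ (weighted by $\epsilon$, since $\tilde\Sigma_2$ lies within $\epsilon$ of $\hat\sigma I$) and the defect $\Delta_1=\tilde B_2+\tilde C_2^*U$ (of size $Q_E$) produce precisely the extra terms of $\rho_3-2\rho_2$. Because $\tilde A$ is stable, its spectrum lies strictly in the left half-plane, which is why~\ref{item.c} is the condition one can verify in practice. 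Finally, once $\hat A$ is shown to have $k$ left-half-plane eigenvalues, the stabilization of~\cref{sec:stabilize} retains exactly these, so the Hankel operator of $(\hat A_s,\hat B_s,\hat C_s,\hat D_s)$ has rank $k$.

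The hard part will be the two algebraic estimates feeding the reductions: deriving the generalized Lyapunov identity for $(\hat A,\Pi)$ and extracting perturbation and resolvent bounds whose norms match $\rho_1,\rho_2,\rho_3$ \emph{exactly}. This is delicate because $\hat A$ contains the ill-conditioned inverse $\Pi^{-1}$, which forces the $\delta^{-1}$ and $\delta^{-2}$ factors, and because $\tilde\Sigma_2\ne\hat\sigma I$ means Glover's clean all-pass identities hold only up to controlled $O(\epsilon)$, $O(Q_E)$, and $O(Q_U)$ errors that must be tracked through every invocation of the balanced Lyapunov equations.
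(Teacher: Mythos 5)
Your overall strategy---an eigenvalue count preserved under a continuation guarded by the pseudospectral hypotheses---is the right instinct, and your reading of the inertia of $\Phi=\tilde\Sigma_1^2-\hat\sigma^2I$ matches the paper's $\Iner(-\Phi)=(k,0,K-r-k)$. But the core of your argument for condition~\ref{item.a} has a genuine gap: the comparison matrix $\hat A_0$ you homotope from does not exist as described. Setting $\epsilon=0$ changes which singular values are split off, so the resulting ``clean'' matrix has a different dimension than $\hat A$; and keeping the partition while pretending $\tilde\Sigma_2=\hat\sigma I_r$ and $Q_E=Q_U=0$ describes a system that is not the given one. To invoke Glover's inertia result (which, incidentally, is Glover's theorem, not \Cref{thm.AAK}) you would have to construct a nearby system whose Gramians are exactly $\diag(\tilde\Sigma_1,\hat\sigma I_r)$ and still satisfy the Lyapunov equations~\cref{eq.lyapunov}; this step is neither carried out nor compatible with your claimed bound, since any such comparison incurs $O(\epsilon)$ and $O(Q_E)$ discrepancies, so $\norm{\hat A-\hat A_0}\le\rho_1$ with $\rho_1$ depending only on $Q_U$ is not attainable. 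The paper needs no ideal comparison system at all: it derives the exact identity~\cref{eq.technical1}, $\hat A\tilde\Sigma_1\Phi^{-1}+\tilde\Sigma_1\Phi^{-1}\hat A^*+\hat B\hat B^*=-\hat\sigma^2\Phi^{-1}\tilde C_1^*\Delta_2\tilde C_1\Phi^{-1}$, valid for the actual $\hat A$, whose residual is controlled by $Q_U$ alone; adding the shift $\alpha_0 I$ with $\alpha_0=Q_U\norm{\tilde C_1}^2$ makes the relevant right-hand side positive definite, Ostrowski's inertia theorem~\cite[Cor.~3]{ostrowski1962some} then gives $\Iner\big(\hat A^*-\alpha_0\tilde\Sigma_1\Phi^{-1}/2\big)=\Iner\big(-\tilde\Sigma_1\Phi^{-1}\big)$, and the continuation is in the scalar shift $\alpha\in[0,\alpha_0]$ (not between two different HNA outputs), with condition~\ref{item.a} ruling out an axis crossing because any crossing eigenvalue would lie in $\sigma_{\rho_1}(\hat A^*)$.

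The reductions are also not salvageable as you state them. For~\ref{item.b} you invoke $\sigma_\rho(X)\subseteq\sigma_{\rho+\eta}(Y)$ whenever $\norm{X-Y}\le\eta$, but by~\cref{eq.technical-1} the exact relation is $\tilde A_{11}=-\hat A^*-\tilde B_1\hat B^*$, and $\norm{\tilde B_1\hat B^*}$ is in no way bounded by $\rho_2-\rho_1$. The paper's argument is vector-specific, not norm-based: from the failure of~\ref{item.a} it takes a unit vector $v$ with $\norm{(\hat A^*-\lambda I)v}<\rho_1$, uses~\cref{eq.technical1} again to show $\norm{\hat B^*v}^2<\norm{\tilde C_1}^2Q_U\delta^{-2}+2\rho_1\delta^{-1}$ for that particular $v$, and only then concludes $\norm{(\tilde A_{11}+\lambda I)v}<\rho_2$. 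Your chain \ref{item.c}$\Rightarrow$\ref{item.b} fails for the same structural reason: pseudospectra of a principal submatrix do not embed into pseudospectra of the full matrix, because for a generic near-kernel vector $w$ of $zI-\tilde A_{11}$ the coupling $\tilde A_{21}w$ is uncontrolled (and the Schur complement of $zI-\tilde A$ is $zI-\tilde A_{11}-\tilde A_{12}(zI-\tilde A_{22})^{-1}\tilde A_{21}$, not $zI-\tilde A_{11}$). The paper instead proves the contrapositive of \ref{item.c}$\Rightarrow$\ref{item.a} directly with the same special vector $v$, using~\cref{eq.technical0} to write $\tilde A_{21}v=\big[(\cdots)\Phi^{-1}-\tilde B_2\hat B^*\big]v$, so that smallness again comes from the $O(\epsilon,Q_E,Q_U)$ norm bound on the bracketed $\Phi^{-1}$ terms together with the smallness of $\hat B^*v$. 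In short, you correctly guessed which quantities enter $\rho_2$ and $\rho_3$, but the mechanism that makes them enter---exact Lyapunov-type identities evaluated on one special vector---is absent from your proposal, and the norm-perturbation substitutes you propose are false.
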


\begin{proof}
See~\Cref{sec:proof1}.
\end{proof}

One can understand~\Cref{thm.rankGhat} from the Bauer--Fike Theorem~\cite[Thm.~2.3]{trefethen1999spectra}, which states that for a diagonalizable matrix $X = V \Lambda V^{-1}$, we have
\[
	\sigma(X) + B_{\rho}(0) \subset \sigma_{\rho}(X) \subset \sigma(X) + B_{\rho\kappa(V)}(0),
\]
where $\sigma(X)$ is the spectrum of $X$, $B_R(0)$ is the open ball of radius $R$ centered at the origin and $\kappa(V)$ is the condition number of $V$. Hence, the condition that $\{z \mid \text{Re}(z) = 0\} \cap \sigma_{\rho_1}(\hat{A}) = \emptyset$ is saying that if the eigenvector matrix of $\hat{A}$ is well-conditioned, then the eigenvalues of $\hat{A}$ are separated from the imaginary axis by a distance on the order of $\rho_1$. This aligns with our intuition because when we perturbed the inputs of our algorithm to introduce an $\epsilon > 0$, the eigenvalues of $\hat{A}$ are perturbed. If an eigenvalue is too close to the imaginary axis, then there is a chance that it could be perturbed across the imaginary axis, changing the rank of the associated Hankel operator. Therefore, in theory, we can guarantee that the rank of the reduced system is $k$ by taking $\epsilon$ small enough. While~\Cref{thm.rankGhat} gives explicit criteria for guaranteeing that the reduced system has the desired rank, it does not tell us what the perturbed rank is if these criteria are violated. To come up with such a statement, one has to study the perturbation of every eigenvalue of $\hat{A}$. In practice, one can instead use the stabilization procedure in~\cref{sec:stabilize} to reveal the rank of the reduced system automatically.

\subsection{Hankel error of the reduced system}

In the previous subsection, we studied the effect of $\epsilon > 0$ on the rank of the reduced system. To justify our modified HNA algorithm, we also need to understand the approximation error $\big\|{\hat{G} - \tilde{G}}\big\|_H$ under our modification, which is analyzed by the following theorem.

\begin{theorem}\label{thm.error}
    There exists a degree-$4$ multivariate polynomial $R: \R^6 \rightarrow \R$ with positive coefficients, such that given any $\rho > 0$ with $\{z \mid \text{Re}(z) = 0\} \cap \sigma_{\rho}(A) = \emptyset$, the transfer function $\tilde{G}$ associated with $(\tilde A,\tilde B,\tilde C,\tilde D)$ and $\hat{G}(z)$ associated with $(\hat{A},\hat{B},\hat{C},\hat{D})$ satisfy
    \begin{equation}
        \big\|{\hat{G} \!-\! \tilde{G}}\big\|_{\infty}^2 \!\leq\! \tilde{\sigma}_{k+1}^2 \!+\! M\! 
        \left(\!\rho^{-1}\!\!\!\sup_{\text{Re}(z) = 0}\!\!\! \norm{X(z)}^{-1}  \!\!(\epsilon \!+\! Q_E \!+\! Q_U) \!+\! \!\sup_{\text{Re}(z) = 0}\!\!\!\norm{X(z)}^{-2} \!Q_U \!+\! \rho^{-2} \epsilon\!\right),
    \end{equation}
    where $M = R\big(\big\|{\tilde{A}}\big\|, \big\|{\tilde{B}}\big\|, \big\|{\tilde{C}}\big\|$, $\tilde{\sigma}_1, 1+Q_E, 1+Q_U\big)$ and
    \begin{equation*}
        X(z) = -z (\tilde{\Sigma}_1^2 - \tilde{\sigma}_{k+1}^2 I_{K-r}) - \tilde{\sigma}_{k+1}^2 \tilde A_{11} + \tilde{\Sigma}_1 \tilde A_{11}^* \tilde{\Sigma}_1 - \tilde{\sigma}_{k+1} \tilde B_1 U^* \tilde C_1.
    \end{equation*}
\end{theorem}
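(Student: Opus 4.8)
The plan is to treat $E := \hat{G} - \tilde{G}$ as a single LTI system and to show that $E$ is an \emph{approximate} all-pass function of level $\tilde{\sigma}_{k+1}$. Since $\norm{E}_\infty^2 = \sup_{\text{Re}(z)=0}\norm{E(z)^*E(z)}$, it suffices to bound $\sup_{\text{Re}(z)=0}\norm{E(z)^*E(z) - \tilde{\sigma}_{k+1}^2 I}$ and then add $\tilde{\sigma}_{k+1}^2$. A realization of $E$ is $A_E = \diag(\tilde{A},\hat{A})$, $B_E = [\tilde{B};\hat{B}]$, $C_E = [\tilde{C},-\hat{C}]$, and $D_E = \tilde{D}-\hat{D} = \tilde{\sigma}_{k+1}U$, where the last equality uses $\hat{D} = \tilde{D} - \tilde{\sigma}_{k+1}U$ from~\cref{eq.gloveralg}. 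I would work entirely with this block realization built from the balanced intermediate system.

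First I would invoke the all-pass lemma: if there is a Hermitian $P_E$ with (i) $A_E^* P_E + P_E A_E + C_E^* C_E = 0$, (ii) $B_E^* P_E + D_E^* C_E = 0$, and (iii) $D_E^* D_E = \tilde{\sigma}_{k+1}^2 I$, then $E(z)^*E(z) = \tilde{\sigma}_{k+1}^2 I$ identically on the imaginary axis. Glover's formulas are engineered so that a specific $P_E$ assembled from $\tilde{\Sigma}_1$, $\tilde{\sigma}_{k+1}$, and $U$ satisfies (i)--(iii) exactly when $\tilde{\Sigma}_2 = \tilde{\sigma}_{k+1}I_r$, $\Delta_1 = 0$, and $\Delta_2 = 0$. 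In the modified algorithm these three quantities are no longer zero, so I would set $r_1 = A_E^*P_E + P_EA_E + C_E^*C_E$, $r_2 = B_E^*P_E + D_E^*C_E$, $r_3 = D_E^*D_E - \tilde{\sigma}_{k+1}^2I$ and bound each residual, block by block, by a polynomial in $\norm{\tilde A},\norm{\tilde B},\norm{\tilde C},\tilde{\sigma}_1,1+Q_E,1+Q_U$ times the appropriate small parameter: the tilde--tilde block of $r_1$ scales like $\epsilon$ (it comes from $\tilde{\Sigma}_2 - \tilde{\sigma}_{k+1}I$), its hat--hat block like $Q_U$, and its off-diagonal block and $r_2$ like $\epsilon + Q_E + Q_U$ (through $\Delta_1,\Delta_2$); the orthonormal completion of $V_B$ is chosen so that $U$ has orthonormal columns, forcing $r_3 = 0$.

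The algebraic engine is then the resolvent identity. Writing $R(z) = (zI - A_E)^{-1}$ and using $A_E R = zR - I$ together with $z + \bar z = 0$ on the imaginary axis, the $P_E R(z) B_E$ cross terms cancel and I obtain
\[
	E(z)^* E(z) - \tilde{\sigma}_{k+1}^2 I = r_3 + r_2 R(z) B_E + \big(r_2 R(z) B_E\big)^* + B_E^* R(z)^* r_1 R(z) B_E.
\]
It remains to convert the resolvent-weighted residuals into the stated scales. Since $A_E$ is block diagonal, $R(z)B_E = \big[(zI-\tilde A)^{-1}\tilde B;\,(zI-\hat A)^{-1}\hat B\big]$, and I would bound the two blocks separately: the pseudospectral hypothesis $\{z : \text{Re}(z)=0\}\cap\sigma_\rho(\tilde A)=\emptyset$ gives $\norm{(zI-\tilde A)^{-1}} \leq \rho^{-1}$, while the factorization $X(z) = \pm(\tilde{\Sigma}_1^2 - \tilde{\sigma}_{k+1}^2 I)(zI-\hat A)$ (up to a norm-preserving transpose) gives $\norm{(zI-\hat A)^{-1}} \leq \norm{\tilde{\Sigma}_1^2 - \tilde{\sigma}_{k+1}^2I}\,\norm{X(z)^{-1}}$, with the factor $\norm{\tilde{\Sigma}_1^2 - \tilde{\sigma}_{k+1}^2I}\leq\tilde{\sigma}_1^2$ and $\norm{\hat B}$ absorbed into $M$. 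Pairing the block structure of $r_1$ with these two resolvent scales yields the three advertised products: the off-diagonal residual sandwiched between one tilde- and one hat-resolvent gives $\rho^{-1}\sup\norm{X(z)^{-1}}(\epsilon+Q_E+Q_U)$, the hat--hat residual gives $\sup\norm{X(z)^{-1}}^2 Q_U$, and the tilde--tilde residual gives $\rho^{-2}\epsilon$. Collecting every system-norm prefactor into a single degree-$4$ polynomial $R$ with positive coefficients produces $M = R(\norm{\tilde A},\norm{\tilde B},\norm{\tilde C},\tilde{\sigma}_1,1+Q_E,1+Q_U)$.

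I expect the main obstacle to be the second step together with the block bookkeeping in the last step: one must pin down Glover's explicit storage matrix $P_E$ for the error system, verify (i)--(iii) exactly in the unperturbed case, and then recompute the residuals entry-by-entry so that each perturbation $\epsilon$, $Q_E$, $Q_U$ lands in the correct resolvent combination. The most delicate point is controlling the linear term $r_2 R(z)B_E$: bounding it by $\norm{r_2}$ alone would manufacture stray $\rho^{-1}(\epsilon+Q_E+Q_U)$ contributions, so one must instead exploit the precise support of $r_2$ in the state blocks (and, where needed, absorb it against the quadratic term) to recover exactly the mixed product. Handling this interaction carefully, rather than through crude norm estimates, is what makes the three-term structure of the bound come out cleanly.
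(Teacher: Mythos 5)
Your proposal follows essentially the same architecture as the paper's proof: form the augmented error system, observe that Glover's formulas make it exactly all-pass only when $\tilde{\Sigma}_2 = \tilde{\sigma}_{k+1}I_r$, $\Delta_1 = 0$, $\Delta_2 = 0$, write down the residuals of the Lyapunov-type identities, expand $E(z)$ times its adjoint via the resolvent identity so that the Gramian cross terms cancel, and then pair each residual block with the appropriate resolvent scale ($\rho^{-1}$ for the $\tilde{A}$-block, $\norm{X(z)^{-1}}$ for the $\hat{A}$-block). Your cancellation identity for the quadratic form is correct, and your assignment of $\epsilon$, $Q_E$, $Q_U$ to the three advertised products matches the paper's terms $e_4$, $e_5$, $e_6$.

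There are, however, two concrete errors. First, you work with $E(z)^*E(z)$ and the observability-side conditions, so your constant term is $D_E^*D_E = \tilde{\sigma}_{k+1}^2\,U^*U \in \C^{m\times m}$. Since the paper assumes $p \leq m$ and $U = V_CV_B^* \in \C^{p\times m}$, the matrix $U^*U$ has rank at most $p$ and cannot be close to $\tilde{\sigma}_{k+1}^2 I_m$ when $p < m$; the decomposition ``$\tilde{\sigma}_{k+1}^2 I$ plus small'' therefore fails on that side. The paper works with $E(z)E(z)^*$ and the controllability-side identities precisely because $D_eD_e^* = \tilde{\sigma}_{k+1}^2\,UU^* = \tilde{\sigma}_{k+1}^2(I_p - \Delta_2)$ is the quantity the construction of $U$ controls. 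Your argument is the exact transpose and must be flipped to survive the rectangular case. Second, your claim that the feedthrough residual $r_3$ vanishes because ``$U$ has orthonormal columns'' is false: $V_B^{(1)}$ is a least-squares solution (see~\cref{eq.constructVB}), not an orthonormal block, so $U$ is not exactly co-isometric. The deviation $\norm{I - UU^*} = Q_U$ is exactly the quantity bounded in~\Cref{thm.QEQU}\ref{item.lem1b}, and it contributes a term of size $\tilde{\sigma}_{k+1}^2 Q_U$ (the paper's $e_1$) that your bound silently drops. Both issues are repairable without changing the structure of the argument, but as written the proof does not close.
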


\begin{proof}
See~\Cref{sec:proof2}.
\end{proof}

One can write an explicit but complicated expression for $R$ (see~\Cref{sec:proof2}). Since $R$ is a fixed polynomial, for a system $(\tilde A,\tilde B,\tilde C,\tilde D)$, we find that $M$ is bounded as $\epsilon \rightarrow 0$ as long as $Q_E$ and $Q_U$ are bounded (see~\Cref{thm.QEQU}). The matrix $X(z)$ adds complication to the expressions. Unfortunately, this term is due to the resolvent $(zI - \tilde{A})^{-1}$ in the transfer function $\tilde{G}$ and cannot be avoided. While it is possible to bound $\norm{X(z)}^{-1}$ using $\delta^{-1}$, in practice, the effect of $\delta^{-1}$ is rarely witnessed (see~\cref{sec:expstability}). In~\cref{sec:blackbox}, we show that given our construction of $U$ (see~\cref{sec:stabilizetheta}), $Q_E$ and $Q_U$ are on the order of $\epsilon$. Hence, unless $\sup_{\text{Re}(z) = 0} \norm{X(z)}^{-1}$ grows rapidly as $\epsilon \rightarrow 0$, $\big\| \hat{G} - G \big\|_H - \tilde{\sigma}_{k+1}$ is on the order of $\epsilon$. This shows that with a small $\epsilon$, the Hankel error of the reduced system produced by the modified HNA algorithm does not deviate from the optimal error, i.e., $\tilde{\sigma}_{k+1}$, by too much.

\subsection{Completing the analysis of the modified HNA algorithm}\label{sec:blackbox}
In the previous subsections, we showed the consequences of modifying the HNA algorithm on the rank and the approximation error of the reduced system. The claim that the modified HNA algorithm is near-optimal is based on the assumption that $Q_E$ and $Q_U$ are small, where $Q_E$ and $Q_U$ depend on $U$. Here, we provide upper bounds on $Q_E$ and $Q_U$.

\begin{theorem}\label{thm.QEQU}
Using the notation introduced in~\cref{sec:algorithm} and~\cref{sec:analysis}, let $s_1 \geq \cdots \geq s_p \geq 0$ be the singular values of $\tilde{C}_2$ and $s_{q} - s_{q+1}$, where we define $s_{p+1} = 0$ when $p = q$. The following statements hold:
    \begin{enumerate}[label=(\alph*)]
        \item \label{item.lem1a} The quantity $Q_E$ satisfies
        \begin{equation}
            Q_E \leq Q_1 + Q_2,
        \end{equation}
        where
        \[
            Q_1 = \gamma + \sqrt{\gamma^2 + 4\epsilon\big\|{\tilde A_{22}}\big\|}, \qquad  Q_2 = \frac{4\sqrt{2}\epsilon\big\|{\tilde A_{22}}\big\|_F}{(s_{q} - s_{q+1}) - 4\epsilon\big\|{\tilde A_{22}}\big\|_F} \big\| \tilde{B}_2 \big\|.
        \]
        Moreover, $Q_1 = 0$ when $q = r$ and $Q_2 = 0$ when $q = r$ or $\tilde A_{22}$ is Hermitian.
        \item \label{item.lem1b} The quantity $Q_U$ satisfies
        \begin{equation}
            Q_U \leq 4s_q^{-2} \epsilon \big\|{\tilde A_{22}}\big\|,
        \end{equation}
        which is zero when $\tilde A_{22}$ is Hermitian.
    \end{enumerate}
\end{theorem}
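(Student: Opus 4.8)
The plan is to derive both bounds from a single perturbation identity that compares the two Gramian blocks $\tilde B_2 \tilde B_2^*$ and $\tilde C_2^* \tilde C_2$. First I would take the $(2,2)$ blocks of the two Lyapunov equations in~\cref{eq.lyapunov} for the balanced realization whose Gramians $\tilde P = \tilde Q$ are partitioned as in~\cref{eq.modifiedpartition}, namely $\tilde A_{22}\tilde\Sigma_2 + \tilde\Sigma_2 \tilde A_{22}^* + \tilde B_2 \tilde B_2^* = 0$ and $\tilde A_{22}^* \tilde\Sigma_2 + \tilde\Sigma_2 \tilde A_{22} + \tilde C_2^* \tilde C_2 = 0$. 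Writing $\tilde\Sigma_2 = \tilde\sigma_{k+1} I_r + E$, where $\norm{E} \le \epsilon$ by the defining property of the cluster $\tilde\sigma_{j_1},\dots,\tilde\sigma_{j_2}$, and subtracting the two equations, the $\tilde\sigma_{k+1} I_r$ contributions cancel and only the commutator with $E$ survives, giving
\[
 \tilde B_2 \tilde B_2^* - \tilde C_2^* \tilde C_2 = (\tilde A_{22}^* - \tilde A_{22})E - E(\tilde A_{22}^* - \tilde A_{22}).
\]
Hence $\Theta := \tilde B_2 \tilde B_2^* - \tilde C_2^* \tilde C_2$ obeys $\norm{\Theta} \le 2\norm{\tilde A_{22}^* - \tilde A_{22}}\,\epsilon \le 4\epsilon\norm{\tilde A_{22}}$, and $\Theta = 0$ whenever $\tilde A_{22}$ is Hermitian. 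This one estimate already accounts for the Hermitian-case vanishing asserted in both parts.

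For part~\ref{item.lem1b} I would reduce $Q_U$ to a statement about $V_B$. Since $U = V_C V_B^*$ with $V_C$ unitary, $Q_U = \norm{I - UU^*} = \norm{I - V_B^* V_B}$. Because the appended block $V_B^{(2)}$ consists of orthonormal columns orthogonal to the range of $V_B^{(1)}$, the matrix $V_B^* V_B$ is block diagonal with an identity $(p-q)\times(p-q)$ corner, so $Q_U = \norm{I_q - (V_B^{(1)})^* V_B^{(1)}}$. Substituting the least-squares formula~\cref{eq.constructVB} for $(V_B^{(1)})^*$, and using $(U_C^{(1)})^* \tilde C_2^* \tilde C_2 U_C^{(1)} = (S_C^{(1)})^2$ together with the identity above, I obtain $(V_B^{(1)})^* V_B^{(1)} = I_q + (S_C^{(1)})^{-1}(U_C^{(1)})^* \Theta\, U_C^{(1)} (S_C^{(1)})^{-1}$. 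Taking norms and using $\norm{(S_C^{(1)})^{-1}} = s_q^{-1}$ yields $Q_U \le s_q^{-2}\norm{\Theta} \le 4 s_q^{-2}\epsilon\norm{\tilde A_{22}}$, exactly as claimed.

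For part~\ref{item.lem1a} I would expand $\Delta_1 = \tilde B_2 + \tilde C_2^* U = \tilde B_2 + U_C S_C V_B^*$ and split it along the column partition of $U_C$. The least-squares choice of $V_B^{(1)}$ makes the $U_C^{(1)}$-block of $\tilde C_2^* U$ equal to $-U_C^{(1)}(U_C^{(1)})^* \tilde B_2$, so that $\Delta_1 = U_C^{(2)}(U_C^{(2)})^* \tilde B_2 + (\text{trailing small-singular-value term})$, where the trailing term has norm $\norm{S_C^{(2)}} \le \gamma$ by the threshold used to choose $q$. For the projected term I would again use the Gramian trick: $\norm{(U_C^{(2)})^*\tilde B_2}^2 = \norm{(U_C^{(2)})^* \tilde B_2 \tilde B_2^* U_C^{(2)}} \le \norm{(U_C^{(2)})^* \tilde C_2^* \tilde C_2 U_C^{(2)}} + \norm{\Theta} = s_{q+1}^2 + \norm{\Theta} \le \gamma^2 + 4\epsilon\norm{\tilde A_{22}}$. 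Summing the two contributions gives $Q_E \le \gamma + \sqrt{\gamma^2 + 4\epsilon\norm{\tilde A_{22}}} = Q_1$, which in particular implies the stated $Q_E \le Q_1 + Q_2$; when $q = r$ both the trailing block and $U_C^{(2)}$ disappear and $Q_E = 0$.

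The main obstacle is the residual term $(I - U_C^{(1)}(U_C^{(1)})^*)\tilde B_2$ in part~\ref{item.lem1a}: it quantifies how much of $\tilde B_2$ leaks out of the leading singular subspace of $\tilde C_2$, which is precisely where the singular structures of the two matrices must be compared. My Gramian route controls it tightly in the operator norm with no dependence on a spectral gap. The term $Q_2$ in the statement corresponds to the alternative, gap-dependent route in which one works instead with the SVD of $\tilde B_2$ and invokes a Davis--Kahan $\sin\Theta$ bound to align the leading left-singular subspaces of $\tilde B_2$ and $\tilde C_2$; there the numerator carries $\norm{\tilde A_{22}}_F$ from a Frobenius estimate of the commutator and the denominator carries the gap $s_q - s_{q+1}$, with $Q_2 = 0$ again forced when $q = r$ or $\tilde A_{22}$ is Hermitian. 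The delicate bookkeeping is ensuring $S_C^{(1)}$ is invertible so that the least-squares projection is well defined, and correctly matching the columns of $U_C$ against the rows of the rectangular matrix $S_C$.
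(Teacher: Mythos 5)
Your proof of part~\ref{item.lem1b} is essentially the paper's own argument: reduce $Q_U$ to $\big\|I_q-(V_B^{(1)})^*V_B^{(1)}\big\|$ via the unitarity of $V_C$ and the block-diagonal structure of $V_B^*V_B$, insert~\cref{eq.constructVB}, and absorb the Gramian perturbation $\Theta=\tilde B_2\tilde B_2^*-\tilde C_2^*\tilde C_2$ (controlled through the $(2,2)$ blocks of the Lyapunov equations~\cref{eq.lyapunov}) at cost $s_q^{-2}\norm{\Theta}$. For part~\ref{item.lem1a}, however, you take a genuinely different and in fact sharper route. The paper introduces a second SVD $\tilde B_2=U_BS_BW_B^*$, uses Weyl's inequality to bound $\big\|U_B^{(2)}S_B^{(2)}(W_B^{(2)})^*\big\|$, and invokes the Davis--Kahan theorem to compare the projectors $U_B^{(1)}(U_B^{(1)})^*$ and $U_C^{(1)}(U_C^{(1)})^*$; that subspace-alignment step is precisely what produces the gap-dependent term $Q_2$ with the factor $\big(s_q-s_{q+1}-4\epsilon\|\tilde A_{22}\|_F\big)^{-1}$. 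You instead observe that, since $U_C$ is square unitary, $I-U_C^{(1)}(U_C^{(1)})^*=U_C^{(2)}(U_C^{(2)})^*$, so the leakage term is $\big\|(U_C^{(2)})^*\tilde B_2\big\|$, which the same Gramian identity bounds directly: $\big\|(U_C^{(2)})^*\tilde B_2\tilde B_2^*U_C^{(2)}\big\|\le \big\|S_C^{(2)}(S_C^{(2)})^*\big\|+\norm{\Theta}\le \gamma^2+4\epsilon\|\tilde A_{22}\|$. This yields the unconditional, gap-free bound $Q_E\le Q_1$, avoiding Davis--Kahan entirely, needing no SVD of $\tilde B_2$, and matching the operator norm $\|\tilde A_{22}\|$ appearing in the statement of $Q_1$ (the paper's proof actually produces $\|\tilde A_{22}\|_F$ at that spot). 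Two minor caveats: your final sentence ``$Q_E\le Q_1$ implies $Q_E\le Q_1+Q_2$'' presupposes $Q_2\ge 0$, i.e., the gap condition $s_q-s_{q+1}>4\epsilon\|\tilde A_{22}\|_F$ --- but that condition is implicitly required for the paper's Davis--Kahan step as well, so nothing is lost; and your argument (like the paper's) needs $s_q>0$ so that $S_C^{(1)}$ is invertible, which is guaranteed by the thresholding that defines $q$.
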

\begin{proof}
See~\Cref{sec:proof3}.
\end{proof}

In most circumstances, $r$ is a very small integer since it is unlikely that a lot of singular values of $\Gamma_{\tilde{G}}$ are numerically close to each other (relative to their magnitudes), in which case one only needs to be concerned with $Q_U$ by taking $q = r$. When $r > p$ or $\tilde C_2^*$ contains very small singular values, we must take $q < r$. Then, we have a problem balancing $\gamma$ and $s_q^{-1}$. That is, as $q$ increases, $\gamma$ decreases but $s_q^{-1}$ increases, so we need to select $q$ so that $Q_E \approx Q_U$. The factor $\nu$ that appears in $Q_2$ comes from the theory of singular vectors perturbation. It is inevitable in the worst-case scenario, although the upper bound is rarely attained in practice.

\subsection{An experiment that corroborates the theory of the modified HNA algorithm}\label{sec:expstability}

In~\Cref{thm.error}, we provide an upper bound on the error of $\big\|{{\tilde{G}} - {\hat{G}}}\big\|_{\infty}$. The error mainly depends on the value of $\epsilon$, and we observe that $\delta$ does not play an important role in the approximation error. To numerically verify~\Cref{thm.error}, we let $A \in \C^{16 \times 16}$ be a random Hermitian, negative definite matrix and 
\[
P \!=\! Q \!=\! \diag\!\left(\!\frac{1}{10}, \ldots, \frac{4}{10}, \frac{5}{10}\!-\!\delta, \frac{5}{10}\!-\!\epsilon, \frac{5}{10}\!-\!\frac{\epsilon}{2}, \frac{5}{10}, \frac{5}{10}\!+\!\frac{\epsilon}{2}, \frac{5}{10}\!+\!\epsilon, \frac{5}{10}\!+\!\delta, \frac{6}{10}, \ldots, 1\!\right)\!.
\]
We select $\epsilon$ and $\delta$ from a logarithmic grid. For each pair of $\epsilon$ and $\delta$, since $A$ is Hermitian and negative definite, we can compute $B$ and $C$ that satisfy the Lyapunov's equations~\cref{eq.lyapunov} by computing the Cholesky factorization of $-AP-PA^*$ and $-A^*Q-QA$, respectively. We then apply the modified HNA algorithm with ${\sigma}_{k+1} = 1/2$ and $(\sigma_{j_1}, \ldots, \sigma_{j_2}) = (1/2-\epsilon,1/2-\epsilon/2,1/2,1/2+\epsilon/2, 1/2+\epsilon)$. Let $G$ be the transfer function of the system $(A,B,C,D)$. In~\Cref{fig:stability}a, we observe that $\big\|{G - \hat{G}}\big\|_\infty - 1/2 = \mathcal{O}(\epsilon)$, whereas it does not depend on $\delta$ in this example. Surprisingly,~\Cref{fig:stability}a shows that the algorithm works even when $\delta < \epsilon$, i.e., when we separate out $\sigma_{5},\sigma_{6},\sigma_{8},\sigma_{10}$ and $\sigma_{11}$ but leave $\sigma_{7}$ and $\sigma_{9}$ in ${\Sigma}_1$. When $\epsilon$ and $\delta$ are both small, we find a region in~\Cref{fig:stability}a where the errors do not follow the general pattern. This happens because the matrices $\hat{A}$ and $\hat{B}$ contain large entries, making computation with the reduced matrices (i.e., evaluating the transfer function $\hat{G}$) numerically unstable (see~\Cref{fig:stability}b). This experimentally justifies the necessity of the modified HNA algorithm.

\begin{figure}
\centering
\begin{minipage}{.48\textwidth}
\begin{overpic}[width=1\textwidth]{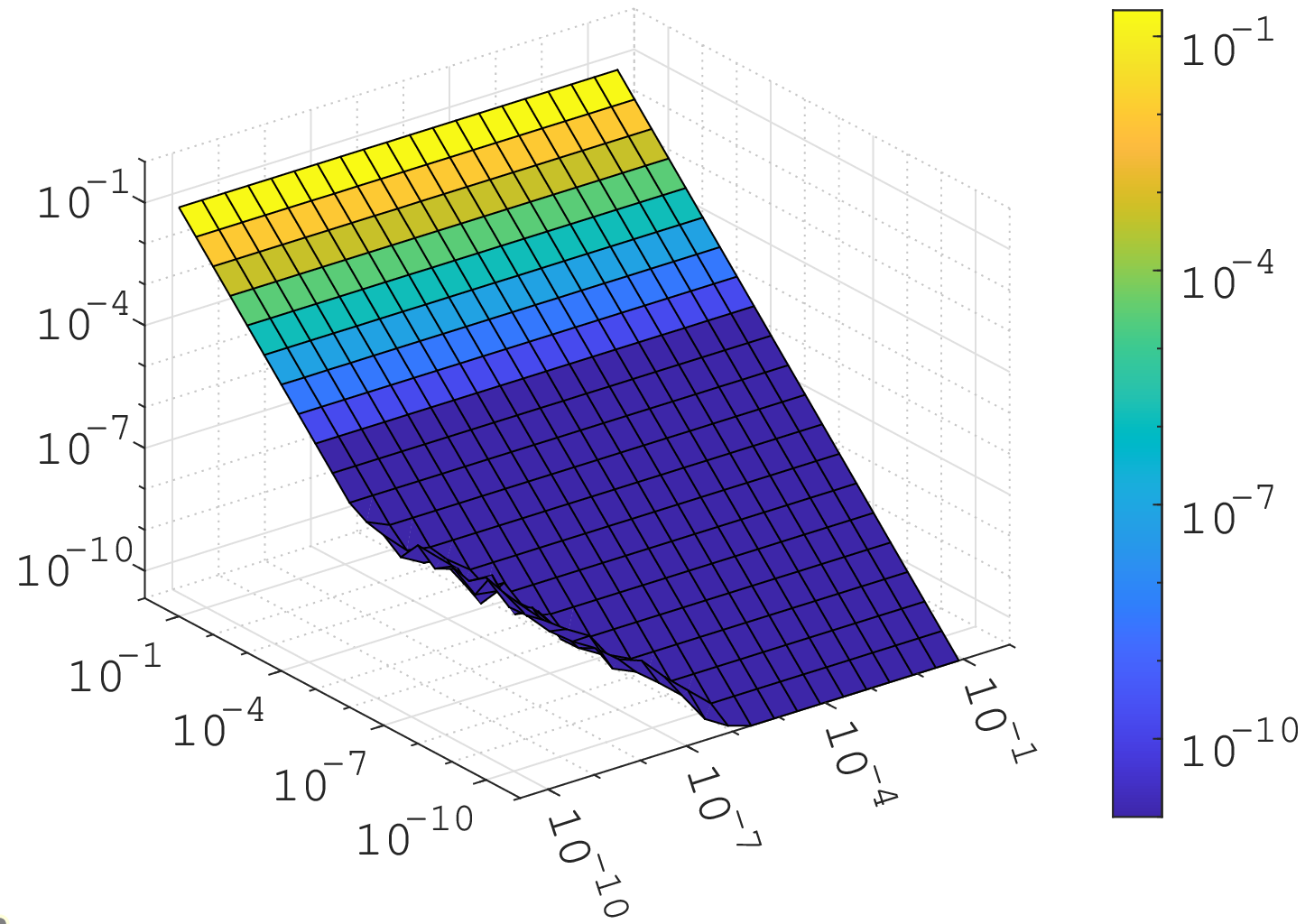}
\put(63,2) {\footnotesize $\delta$}
\put(15,7) {\footnotesize $\epsilon$}
\put(-6,26) {\rotatebox{90}{\footnotesize $\big\|{G \!-\! \hat{G}}\big\|_\infty \!\!\!-\! 0.5$}}
\end{overpic}
\caption*{\footnotesize (a) Approximation error}
\end{minipage}
\begin{minipage}{.48\textwidth}
\begin{overpic}[width=1\textwidth]{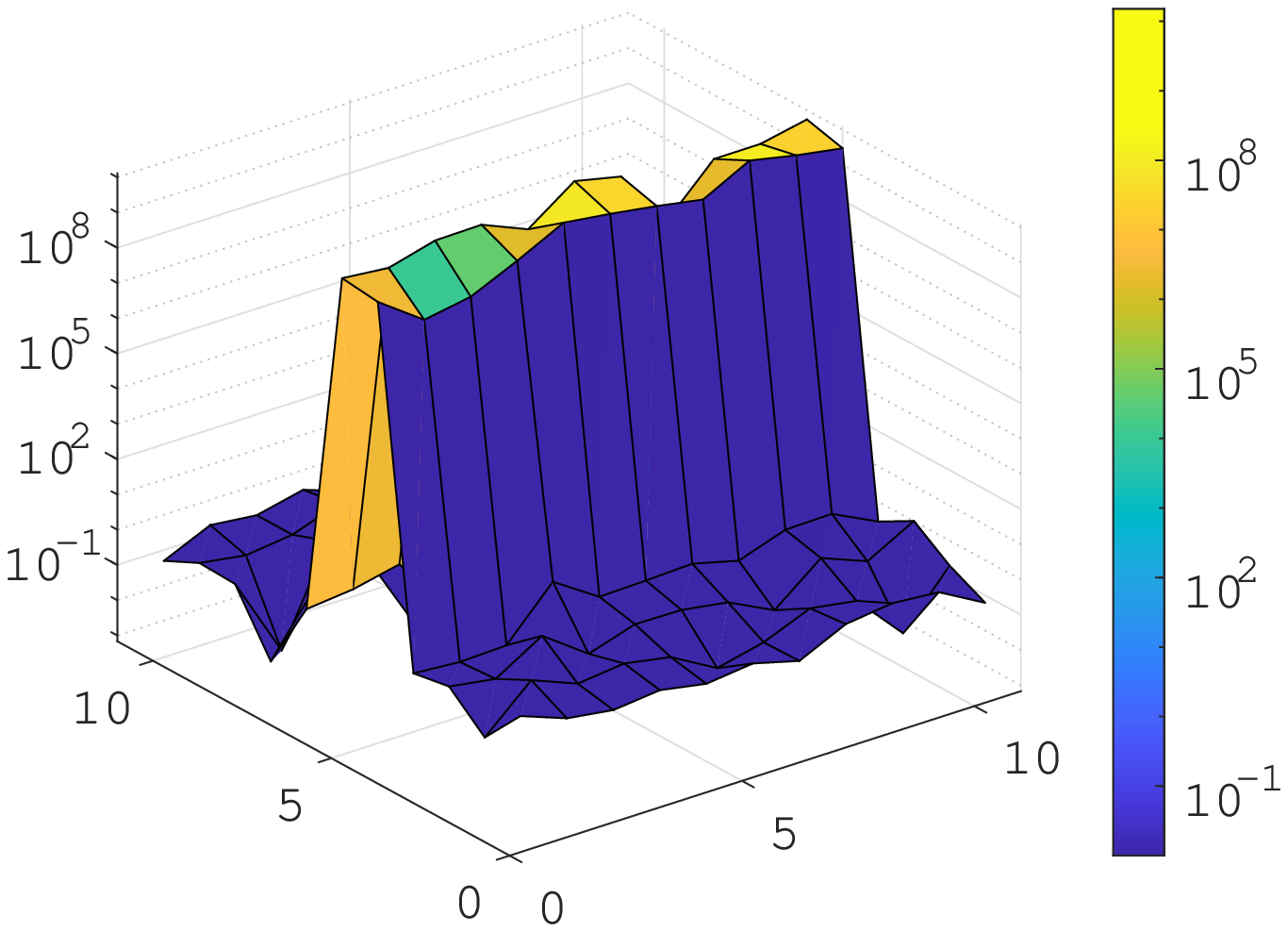}
\put(62,4) {\footnotesize $i$}
\put(18,9) {\footnotesize $j$}
\put(-3,35) {\rotatebox{90}{\footnotesize ${\big|}{\hat{A}_{i,j}}{\big|}$}}
\end{overpic}
\caption*{\footnotesize (b) Sizes of matrix entries}
\end{minipage}
\caption{Left: The approximation error $\big\|{G - \hat{G}}\big\|_\infty - 0.5$ as a function of $\epsilon$ and $\delta$. Right: The absolute values of matrix entires of $\hat{A}$ when $\epsilon = 1 \times 10^{-11}$ and $\delta = 2.5 \times 10^{-11}$.}
\label{fig:stability}
\end{figure}

\section{Adaptations of our two-stage algorithm}\label{sec:adaptation}

Our two-stage algorithm (see \Cref{alg:hankelAAA}) solves Problem~\ref{prob.MRHankel}, which approximates the system by a reduced one in the Hankel norm. Sometimes, however, we must consider a stronger approximation metric that guarantees the reduced system inherits more features from the original one. It turns out that one can do so by using the $\norm{\cdot}_\infty$ norm (see~\cref{eq.Hinfnorm}):

\begin{problem}\label{prob.MRHinf}
Find a stable reduced system $(\hat{A},\hat{B},\hat{C},\hat{D})$ with the transfer function $\hat{G}$, where $\hat{A} \in \C^{k \times k}, \hat{B} \in \C^{k \times m}, \hat{C} \in \C^{p \times k}$, and $\hat{D} \in \C^{p \times m}$, that minimizes $\big\| G - \hat{G} \big\|_{\infty}$.
\end{problem}

By~\cref{eq.transferlaplace}, if the transfer functions of the two systems are close pointwise on the imaginary axis, then the outputs of the two systems given the same input are close. Therefore, Problem~\ref{prob.MRHinf} is ideal for preserving the frequency response of the system. The implementation discussed in~\cref{sec:algorithm}, where we ``throw away" the $H^\infty$ functions when stabilizing the systems (see~\cref{sec:stabilize}), does not suffice to solve Problem~\ref{prob.MRHinf}. To modify the implementation of line~\ref{state.endAAA} and~\ref{state.endHNA} in~\Cref{alg:hankelAAA}, we instead apply the procedure in~\cite[sect.~3.3]{kohler2014closest}, which is implemented and tested in~\cite{gosea2021enforcing}. This procedure approximates an unstable system by a stable system of the same rank and guarantees that their transfer functions are the closest in the infinity norm. Instead of using this stabilizing algorithm for a generic unstable system, one can use an \textit{ad hoc} post-processing procedure discussed in Glover's paper~\cite[sect.~10]{glover1984optimal}, which was designed for the HNA outputs, to stabilize the final reduced system (i.e., line~\ref{state.endHNA} of~\Cref{alg:hankelAAA}). This algorithm has an explicit error bound on the $\norm{\cdot}_\infty$ error of the stabilized transfer function (see~\cite[Thm.~9.7]{glover1984optimal}). 

\section{Selecting the parameters in our two-stage algorithm}\label{sec:parameters}

\Cref{alg:hankelAAA} involves several input values we must select. The parameters $\epsilon$ and $\gamma$ are used to avoid numerical issues, and in practice, we often want to set them to numbers smaller than $\tilde{\sigma}_{k+1}$ and larger than machine precision.

In some cases, we are given a set of samples and do not have control of the locations of $z_i$'s. However, when we can select the samples' locations, our choice impacts the block-AAA algorithm's performance. The imaginary axis is an unbounded domain, and to sample this domain, we take the M\"obius transforms given by 
\[
	z = \frac{s-1}{s+1} \quad \Leftrightarrow \quad s = \frac{1+z}{1-z},
\]
which map between the imaginary axis and the unit circle.\footnote{In the setting for discrete-time LTI systems, the transfer functions are typically analyzed on the unit circle~\cite{glover1984optimal}.} Without any prior information about the transfer function, a natural way of choosing the samples is to place an evenly spaced grid of size $N$ on the unit circle and then map them onto the imaginary axis using a M\"obius transform. When specific properties of the transfer function $G$ are known, samples can be placed in a more informed way, e.g., we can make the samples cluster in regions where $G$ is more oscillatory (see~\cref{sec:expcdplayer}).

The choice of $d$ is also crucial because it controls the time-accuracy trade-off of our two-stage algorithm. If we assume that $m$ and $p$ are constants, then constructing the rational approximation using the block-AAA algorithm takes $\mathcal{O}(Nd^3)$ flops~\cite{nakatsukasa2018AAA}. Since $K = \mathcal{O}(d)$, applying the modified HNA algorithm takes $\mathcal{O}(d^3)$ flops. Overall, the time complexity of our two-stage algorithm is $\mathcal{O}(Nd^3)$.  We want $d$ to be large enough so that the first stage produces an intermediate system that is accurate and yet not too large to make our algorithm computationally inefficient. In practice, we can choose $d$ adaptively in the block-AAA algorithm. That is, we predetermine an error threshold and a maximum degree $d_{\max}$. We run the block-AAA algorithm until the error gets lower than the threshold or the degree reaches $d_{\max}$.

The regularization parameter $\lambda$ in~\cref{eq.regularizedAAA} is hard to determine a priori. However, one can choose $\lambda$ by running the block-AAA algorithm and comparing two errors. Let $\text{fl}(\tilde{A})$, $\text{fl}(\tilde{B})$, and $\text{fl}(\tilde{C})$ be the floating-point representations of the matrices $\tilde{A}, \tilde{B}$, and $\tilde{C}$ (see~\cref{sec:stableAAA}). Let $\text{fl}(R_d)(z)$ be the evaluation of $\text{fl}(\tilde{C}) (zI - \text{fl}(\tilde{A}))^{-1} \text{fl}(\tilde{B})$ in floating-point arithmetic. We consider the block-AAA error $E_1 = \norm{G - R_d}_\infty$ and the numerical error $E_2 = \norm{R_d - \text{fl}(R_d)}_\infty$. As $\lambda$ increases, we expect that $E_1$ increases because the least-squares system~\cref{eq.lsqblockAAA} is solved less accurately and that $E_2$ decreases because $\norm{W^{(d)}}_F$ is suppressed by the regularization, which cures the numerical issues discussed in~\cref{sec:stableAAA}. We leverage the regularized least-squares solver when we make $E_1$ and $E_2$ to have the same order of magnitude. In practice, using the bisection method, one can search for such a $\lambda$.

\section{Experiments on real-world problems}\label{sec:expdis}

This section presents two experiments to demonstrate our two-stage algorithm on real-world problems. 

\subsection{An experiment on the ATMOS example}\label{sec:expeady}

The first example models the track of a storm in the atmosphere~\cite{chahlaoui2005benchmark,farrell1995stochastic}. In this example, $A$ is a $598 \times 598$ matrix and $m = p = 1$. The Hankel singular values decay relatively fast. We apply our two-stage algorithm with different values of $d$ to compute a rank-$10$ reduced model.  We sample from evenly spaced points on the unit circle (see~\cref{sec:parameters}). We apply three algorithms: balanced truncation (BT), our two-stage algorithm that is based on the least-squares solver in the AAA algorithm (LS), and one that is based on the regularized least-squares solver (RLS). In~\Cref{fig:eady}a, we see that as $d$ increases, using the least-squares solver induces numerical instabilities and causes a decrease in the accuracy, while using the regularized least-squares solver rectifies the issue. In~\Cref{fig:eady}b, we see that our two-stage algorithm runs faster than a pure SVD-based algorithm, and yet the approximation error is tiny.

\begin{figure}
\centering

\begin{minipage}{.48\textwidth}
\begin{overpic}[width=1\textwidth]{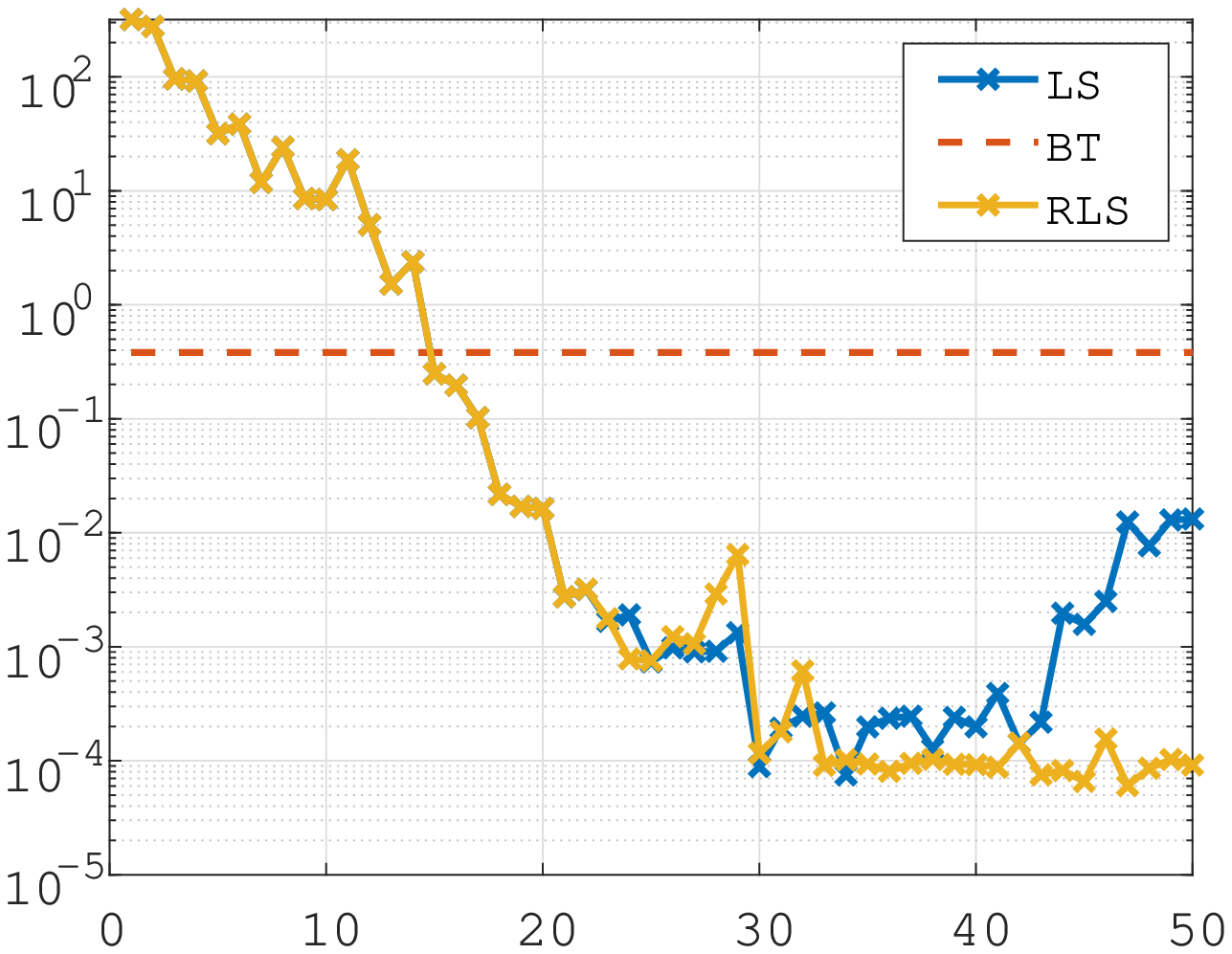}
\put(50,-2) {\footnotesize $d$}
\put(-2,20) {\rotatebox{90}{\footnotesize $\|{\hat{G} - G}\|_H - \sigma_{11}$}}
\end{overpic}
\caption*{\footnotesize (a) Approximation error}
\end{minipage}
\begin{minipage}{.48\textwidth}
\begin{overpic}[width=1\textwidth]{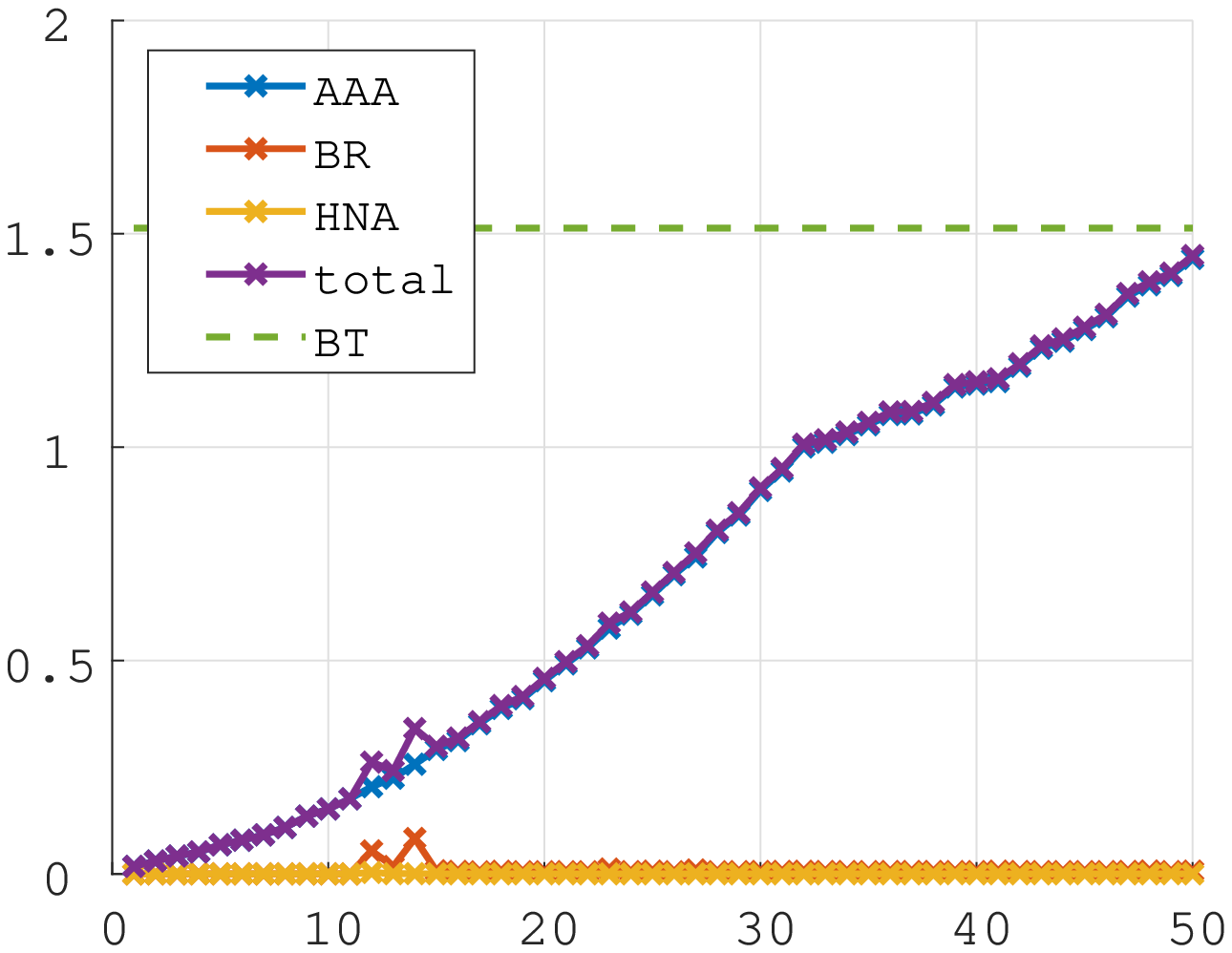}
\put(52,-2) {\footnotesize $d$}
\put(2,37) {\rotatebox{90}{\footnotesize time}}
\end{overpic}
\caption*{\footnotesize (b) Execution time}
\end{minipage}
\caption{Model reduction on the ATMOS example. Left: The approximation error minus the optimal error of our two-stage algorithm as $d$ increases and the approximation error of the balanced truncation output. Right: The execution time of the block-AAA, balanced realization (BR), and modified HNA algorithms as $d$ increases, and the time of the balanced truncation algorithm.}
\label{fig:eady}
\end{figure}

\subsection{An experiment on the C-DISC example}\label{sec:expcdplayer}
Next, we consider an example of track-following for a CD player. Details of the problem specification and the model can be found in~\cite{chahlaoui2005benchmark,draijer1992adaptive,wortelboer1996closed}. In this example, we have $A \in \C^{120 \times 120}$ and $m = p = 2$. This is a multiple-input multiple-output (MIMO) model, and we seek a rank-$10$ Hankel approximation. 
Since the transfer function changes more rapidly near $z = 0$, we sample logarithmically on $[-10^3,-10]i$ and $[10,10^3]i$ on the imaginary axis (see~\Cref{fig:CDplayer}a). This problem is multi-scale in the sense that the magnitude of the transfer function is at the order of $10^5$, whereas $\sigma_{11} = 8.70$. This makes it challenging to construct the intermediate system in a numerically stable way (see~\cref{sec:stableAAA}). To attack this issue, we apply the block-AAA algorithm twice with different values of $\lambda$. First, we construct a degree-$10$ rational approximation $R_{10}^{(1)}$ of $G$ using $\lambda = 10^{-2}$ to obtain a coarse approximation that reduces the scale of $G - R_{10}^{(1)}$. Then, we set $\lambda = 10^{-9}$ to obtain an accurate approximation $R_{d-10}^{(2)}$ of $G - R_{10}^{(1)}$. The systems associated with $R_{10}^{(1)}$ and $R_{d-10}^{(2)}$ can then be concatenated to obtain the intermediate system. In~\Cref{fig:CDplayer}b, we see that applying the block-AAA algorithm twice outperforms using a single execution of the block-AAA algorithm. In both cases, using the regularized least-squares solver is important for the numerical stability concerns.

\begin{figure}
\centering
	\begin{minipage}{.48\textwidth}
	\begin{overpic}[width=1\textwidth]{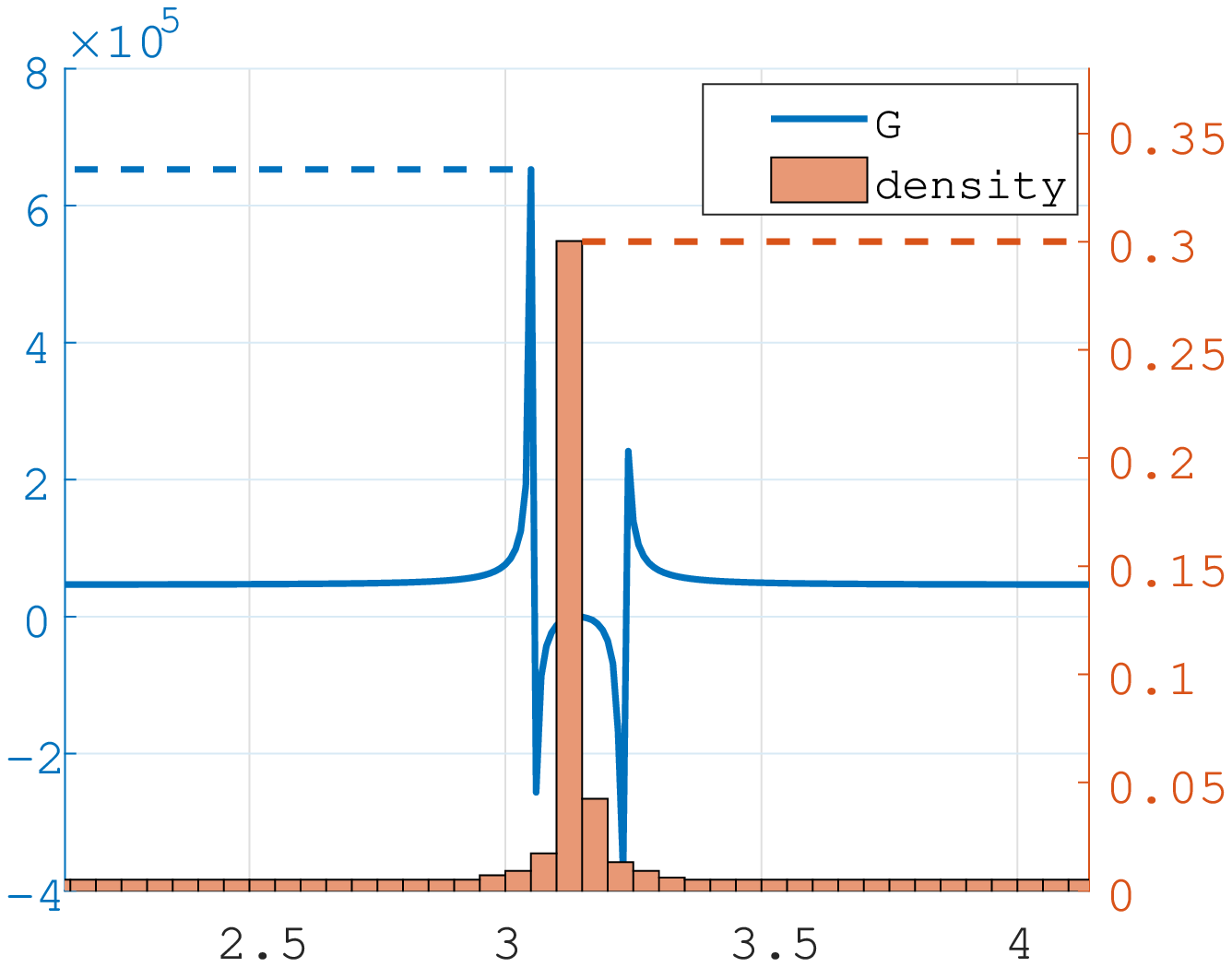}
	\put(50,-2) {\footnotesize $\theta$}
	\put(4,20) {\rotatebox{90}{\footnotesize transfer function}}
	\put(100,45) {\rotatebox{270}{\footnotesize density}}
	\end{overpic}
	\caption*{\footnotesize (a) Transfer function and samples' distribution}
	\end{minipage}
	\quad
	\begin{minipage}{.48\textwidth}
	\begin{overpic}[width=1\textwidth]{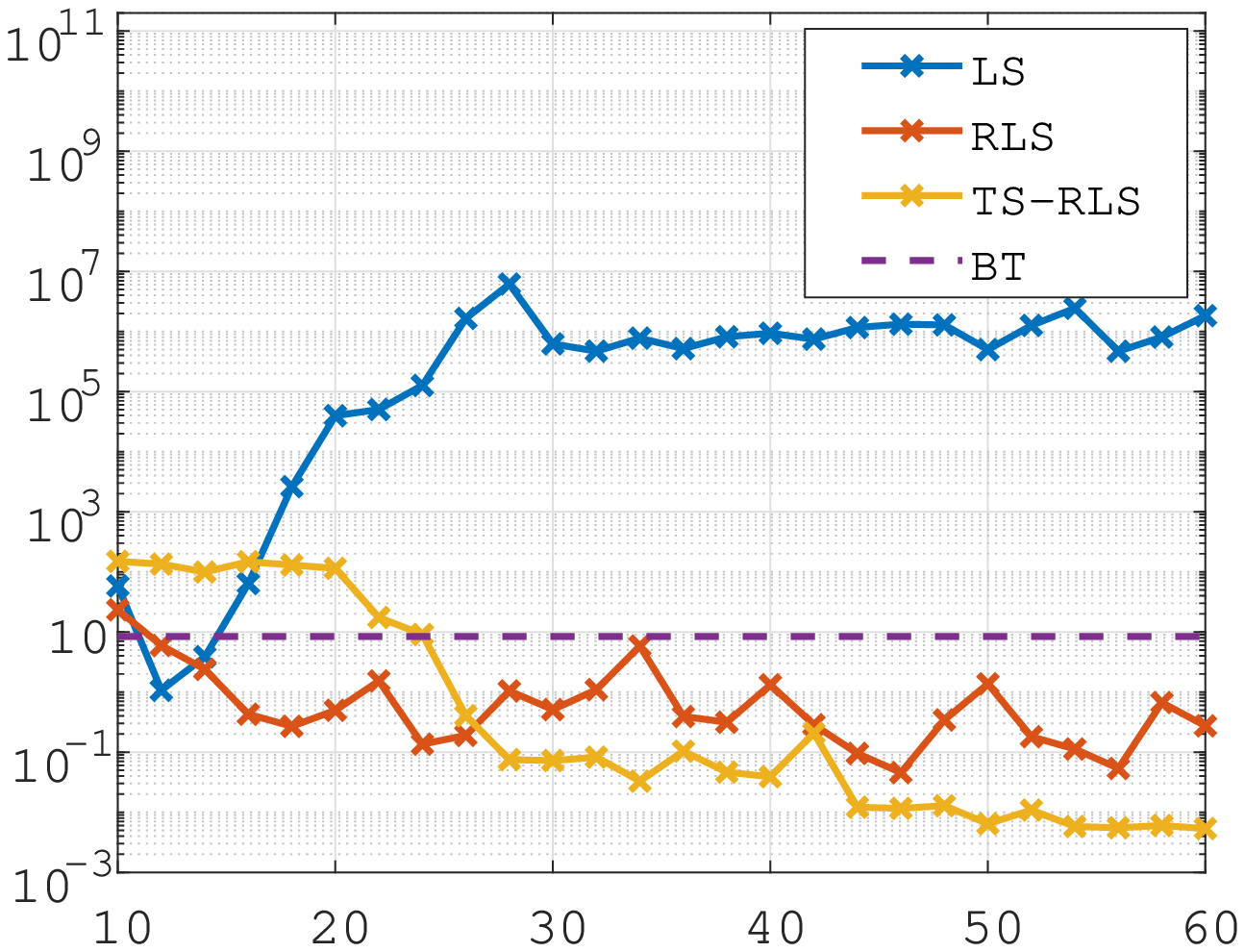}
	\put(50,-2) {\footnotesize $d$}
	\put(0,20) {\rotatebox{90}{\footnotesize $\|{\hat{G} - G}\|_H - \sigma_{11}$}}
	\end{overpic}
	\caption*{\footnotesize (b) Approximation error}
	\end{minipage}

\caption{Model reduction on the C-DISC example. Left: The transfer function $G$ (left vertical axis) and the density of the sample locations (right vertical axis) on the unit circle (see~\cref{sec:parameters}) parameterized by the angle $\theta$. Right: The approximation error minus the optimal error of our two-stage algorithm as $d$ increases and the approximation error of the balanced truncation output.}
\label{fig:CDplayer}
\end{figure}

\section{Conclusion}\label{sec:conclusion}

This paper proposed a robust and efficient two-stage algorithm for reducing an LTI system based on the block-AAA and modified HNA algorithms. Our two-stage algorithm has a parameter $d$ that allows us to choose a balance between computational time and accuracy. The algorithm contained many parameters and interchangeable components, which allows an independent study of any combination of them to develop an accurate, fast, and numerically stable algorithm for model reduction.

\appendix

\section{Proof of~\Cref{thm.rankGhat}}\label{sec:proof1}

This section presents the proof of~\Cref{thm.rankGhat}. The matrix $\tilde{\Sigma}_1^2 -\hat{\sigma}^2 I$ plays an important role in the analysis, and we denote it by $\Phi$. Recall that $\tilde{\Sigma}_2 \!\!=\! \text{diag}(\tilde \sigma_{j_1}, \ldots, \tilde \sigma_{j_2})$. We also introduce a list of auxiliary matrices appearing in the technical equations in~\Cref{lem.technicals}. We define an augmented system by
\begin{equation}
    A_e \!=\! \left[
    \begin{array}{c|c|c}
        \tilde A_{11} & \tilde A_{12} & 0 \\
        \hline
        \tilde A_{21} & \tilde A_{22} & 0 \\
        \hline
        0 & 0 & \hat{A}
    \end{array}
    \right], \quad
    B_e \!=\! \left[
    \begin{array}{c}
         \tilde B_1 \\
         \hline
         \tilde B_2 \\
         \hline
         \hat{B}
    \end{array}
    \right], \quad
    C_e \!=\! \left[
    \begin{array}{c|c|c}
         \tilde C_1 & \tilde C_2 & -\hat{C}
    \end{array}
    \right], \quad
    D_e \!=\! \tilde D - \hat{D},
\end{equation}
and let
\begin{equation}
    P_e = 
    \left[
    \begin{array}{c|c|c}
        \tilde{\Sigma}_1 & 0 & I_{K-r} \\
        \hline
        0 & \hat{\sigma}I_r & 0 \\
        \hline
        I_{K-r} & 0 & \tilde{\Sigma}_1\Phi^{-1}
    \end{array}
    \right].
\end{equation}
When $\epsilon = 0$, $P_e$ is the controllability gramian of the system $(A_e,B_e,C_e,D_e)$, i.e., it satisfies $A_eP_e + P_eA_e^* + B_eB_e^* = 0$~\cite{glover1984optimal}. However, the equation is not satisfied with a positive $\epsilon > 0$. We define the error term as
\begin{equation}\label{eq.perturbedlyapunov}
\begin{aligned}
&A_eP_e \!+\! P_eA_e^* \!+\! B_eB_e^* = \Omega = \left[
\begin{array}{c|c|c}
     \Omega_{11} & \Omega_{12} & \Omega_{13}  \\
     \hline
     \Omega_{12}^* & \Omega_{22} & \Omega_{23}  \\
     \hline
     \Omega_{13}^* & \Omega_{23}^* & \Omega_{33}  \\
\end{array}
    \right] \\
        &=\!\! \left[
            \begin{array}{c|c|c}
                \!\!\!\!\tilde A_{11}\!\tilde{\Sigma}_1 \!+\! \tilde{\Sigma}_1\! \tilde A_{11}^* \!+\! \tilde B_1 \tilde B_1^*\!\!\! &\!\! \hat{\sigma}\tilde A_{12} \!+\! \tilde{\Sigma}_1 \tilde A_{21}^* \!+\! \tilde B_1 \tilde B_2^* & \tilde A_{11} + \hat{A}^* \!+\! \tilde B_1\hat{B}^* \\
                \hline
                \!\!\!\hat{\sigma} \tilde A_{12}^* \!+\! \tilde A_{21}\tilde{\Sigma}_1 \!+\! \tilde B_2 \tilde B_1^*\!\!\! &\!\! \hat{\sigma}\tilde A_{22} \!+\! \hat{\sigma} \tilde A_{22}^* \!+\! \tilde B_2 \tilde B_2^* & \tilde A_{21} + \tilde B_2 \hat{B}^* \\
                \hline
               \tilde A_{11}^* \!+\! \hat{A} \!+\! \hat{B} \tilde B_1^* &\!\! \tilde A_{21}^* \!+\! \hat{B} \tilde B_2^* & \!\!\hat{A}\tilde{\Sigma}_1\Phi^{-1} \!\!+\! \tilde{\Sigma}_1\Phi^{-1}\!\! \hat{A}^* \!\!+\! \hat{B}\hat{B}^*\!\!\!\!
            \end{array}
        \right]\!.
        \end{aligned}
\end{equation}
Let $\Omega_1$ be the top-left $K \times K$ sub-block of $\Omega$, $\Omega_2$ be the top-right $K \times (K-r)$ sub-block of $\Omega$, and $\Omega_3$ be the bottom-right $(K-r) \times (K-r)$ sub-block of $\Omega$. We provide a list of technical equations used in the analysis. Some of them are not used until the next section. However, we present all technical equations here for cleanness.

\begin{lemma}\label{lem.technicals}
The following equations hold:
\begin{align}
   &B_2 U^* =  \Delta_1U^* + C_2^* \Delta_2 - C_2^*. \label{eq.secondU}\\
   &\tilde A_{11} + \hat{A}^* + \tilde B_1\hat{B}^* = 0, \label{eq.technical-1}\\
    &\tilde A_{21} \!+\! \tilde B_2\hat{B}^* \!=\! (\hat{\sigma}(\tilde{\Sigma}_2 \!-\! \hat{\sigma}I)\tilde A_{21} \!+\! (\hat{\sigma}I \!-\! \tilde{\Sigma}_2)\tilde A_{12}^* \tilde{\Sigma}_1 \!+\! \hat{\sigma}\Delta_1U^*\tilde C_1 \!+\! \hat{\sigma} \tilde C_2^* \Delta_2 \tilde C_1)\Phi^{-1}, \label{eq.technical0}\\
    &\hat{A} \tilde{\Sigma}_1 \Phi^{-1} \!+\! \tilde{\Sigma}_1 \Phi^{-1} \hat{A}^* \!+\! \hat{B}\hat{B}^* = -\hat{\sigma}^2 \Phi^{-1} \tilde C_1^* \Delta_2 \tilde C_1\Phi^{-1}, \label{eq.technical1}\\
    &\tilde A_{11}^*\Phi + \Phi \hat{A} + \tilde C_1^* \hat{C} = 0, \label{eq.technical2}\\
    &\tilde A_{12}^* \Phi + \tilde C_2^* \hat{C} = \hat{\sigma} (\tilde{\Sigma}_2 - \hat{\sigma}I) \tilde A_{12}^* - (\tilde{\Sigma}_2 - \hat{\sigma}I) \tilde A_{21} \tilde{\Sigma}_1 + \hat{\sigma}\Delta_1\tilde B_1^*, \label{eq.technical3} \\
    &\tilde{\Delta} := D_e B_e^* + C_eP_e = \left[
        \begin{array}{c|c|c}
            0 & \hat{\sigma} (U\Delta_1^* + \Delta_2^* \tilde C_2) & - \hat{\sigma}^2 \Delta_2 \tilde C_1 \Phi^{-1}
        \end{array}
        \right]. \label{eq.technical4}
\end{align}
\end{lemma}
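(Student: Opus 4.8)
The plan is to verify the seven identities by direct substitution, since each is a purely algebraic consequence of three inputs: the HNA formulas \cref{eq.gloveralg} written for the intermediate system (so that $\hat\sigma = \tilde\sigma_{k+1}$, $\Phi = \tilde\Sigma_1^2 - \hat\sigma^2 I$, and e.g. $\hat A = \Phi^{-1}(\hat\sigma^2\tilde A_{11}^* + \tilde\Sigma_1\tilde A_{11}\tilde\Sigma_1 - \hat\sigma\tilde C_1^* U\tilde B_1^*)$); the definitions $\Delta_1 = \tilde B_2 + \tilde C_2^* U$ and $\Delta_2 = I - UU^*$; and the block form of the two Lyapunov equations \cref{eq.lyapunov} for the balanced realization $(\tilde A,\tilde B,\tilde C,\tilde D)$ with $\tilde P = \tilde Q$ under the partition \cref{eq.modifiedpartition}. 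From controllability I would use the $(1,1)$ block $\tilde A_{11}\tilde\Sigma_1 + \tilde\Sigma_1\tilde A_{11}^* + \tilde B_1\tilde B_1^* = 0$ and the off-diagonal block $\tilde A_{21}\tilde\Sigma_1 + \tilde\Sigma_2\tilde A_{12}^* + \tilde B_2\tilde B_1^* = 0$; from observability the $(1,1)$ block $\tilde A_{11}^*\tilde\Sigma_1 + \tilde\Sigma_1\tilde A_{11} + \tilde C_1^*\tilde C_1 = 0$ and the off-diagonal block $\tilde A_{12}^*\tilde\Sigma_1 + \tilde\Sigma_2\tilde A_{21} + \tilde C_2^*\tilde C_1 = 0$.

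I would establish the identities in an order that reuses earlier ones. First, \cref{eq.secondU} follows from the definitions of $\Delta_1,\Delta_2$ alone: expanding $\Delta_1 U^* + \tilde C_2^*\Delta_2 - \tilde C_2^*$ cancels the $\tilde C_2^* UU^*$ and $\tilde C_2^*$ terms and leaves $\tilde B_2 U^*$. Next, the diagonal relations \cref{eq.technical-1} and \cref{eq.technical2} are cleanest: after substituting $\hat A,\hat B,\hat C$, the cross-terms $\pm\hat\sigma\tilde C_1^* U\tilde B_1^*$ cancel in pairs, and a single use of the $(1,1)$ controllability (resp. observability) block collapses the remainder to zero once a factor $\Phi^{-1}$ is pulled out. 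Identity \cref{eq.technical4} needs no Lyapunov equation: substituting $\hat B,\hat C$ and $\hat D = \tilde D - \hat\sigma U$ block-by-block into $D_eB_e^* + C_eP_e$, the first block vanishes by the definition of $\hat C$, the second reduces to $\hat\sigma(U\tilde B_2^* + \tilde C_2)$ and is rewritten via \cref{eq.secondU}, and the third collapses to $-\hat\sigma^2\Delta_2\tilde C_1\Phi^{-1}$ after using $\tilde C_1 = (\tilde C_1\tilde\Sigma_1^2 - \hat\sigma^2\tilde C_1)\Phi^{-1}$.

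The off-diagonal identities \cref{eq.technical0} and \cref{eq.technical3} follow the same recipe but invoke both off-diagonal Lyapunov blocks, together with \cref{eq.secondU} for the former and the definition of $\Delta_1$ for the latter. After substituting the relevant HNA formula and writing $\tilde A_{21} = (\tilde A_{21}\tilde\Sigma_1^2 - \hat\sigma^2\tilde A_{21})\Phi^{-1}$, the quadratic terms $\tilde A_{21}\tilde\Sigma_1^2$ cancel, and replacing $\tilde B_2\tilde B_1^*$ and $\tilde C_2^*\tilde C_1$ by their Lyapunov expressions regroups the survivors into $\hat\sigma(\tilde\Sigma_2 - \hat\sigma I)\tilde A_{21}$, $(\hat\sigma I - \tilde\Sigma_2)\tilde A_{12}^*\tilde\Sigma_1$, and the $\Delta$-terms. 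Finally, \cref{eq.technical1} is the most laborious: sandwiching everything as $\Phi^{-1}(\cdot)\Phi^{-1}$, two pairs of $\hat\sigma$ cross-terms cancel, the $(1,1)$ controllability block removes the higher-order $\tilde\Sigma_1$-terms, and the $(1,1)$ observability block turns the leftover $\hat\sigma^2(\tilde A_{11}^*\tilde\Sigma_1 + \tilde\Sigma_1\tilde A_{11})$ into $-\hat\sigma^2\tilde C_1^*\tilde C_1$, which combines with $\hat\sigma^2\tilde C_1^* UU^*\tilde C_1$ to yield $-\hat\sigma^2\tilde C_1^*\Delta_2\tilde C_1$.

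The only genuine difficulty is bookkeeping: each identity spans roughly a dozen terms, and the content is entirely in tracking which $\hat\sigma$ cross-terms cancel and which survivors coalesce into $\Delta_1$, $\Delta_2$, and the factor $\tilde\Sigma_2 - \hat\sigma I$. A reliable sanity check at every step is Glover's exact case: when $\tilde\Sigma_2 = \hat\sigma I$ and $\Delta_1 = \Delta_2 = 0$, the right-hand sides of \cref{eq.technical0}--\cref{eq.technical4} all vanish, recovering the fact that $P_e$ is the exact controllability Gramian of the augmented system in \cref{eq.perturbedlyapunov}; these identities are precisely the bookkeeping that records how that exactness degrades once $\epsilon > 0$.
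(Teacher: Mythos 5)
Your proposal is correct and follows essentially the same route as the paper: each identity is verified by direct substitution of the HNA formulas, the definitions of $\Delta_1$ and $\Delta_2$, and the appropriate diagonal or off-diagonal blocks of the two Lyapunov equations for the balanced realization, with the same cancellation structure (e.g.\ writing $\tilde A_{21} = \tilde A_{21}\Phi\Phi^{-1}$ and regrouping survivors into $\tilde\Sigma_2 - \hat\sigma I$ and $\Delta$-terms). The only difference is ordering and the added sanity check against Glover's exact case, which is a sensible bookkeeping aid but not a change of method.
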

\begin{proof}
To prove~\cref{eq.secondU}, we have
\begin{equation*}
	B_2 U^* = (\Delta_1 - C_2^* U)U^* = \Delta_1U^* - C_2^* UU^* = \Delta_1U^* + C_2^* \Delta_2 - C_2^*.
\end{equation*}
Next, note that
\begin{align*}
    \tilde B_1\hat{B}^* &= (\tilde B_1 \tilde B_1^*\tilde{\Sigma}_1 + \hat{\sigma} \tilde B_1 U^* \tilde C_1)\Phi^{-1} \\
    &= (-\tilde A_{11}\tilde{\Sigma}_1^2 - \tilde{\Sigma}_1\tilde A_{11}^*\tilde{\Sigma}_1 + \hat{\sigma} \tilde B_1 U^* \tilde C_1)\Phi^{-1} = -\tilde A_{11} - \hat{A}^*,
\end{align*}
where the first step follows from the Lyapunov equations~\cref{eq.lyapunov} and the second step follows from the definition of $\hat{A}$. This proves~\cref{eq.technical-1}. Next, we have
\begin{align*}
    &\tilde B_2 \hat{B}^* =\! \tilde B_2(\tilde B_1^*\tilde{\Sigma}_1 + \hat{\sigma}U^* \tilde C_1)\Phi^{-1} = (\tilde B_2 \tilde B_1^*\tilde{\Sigma}_1 - \hat{\sigma} \tilde C_2^* \tilde C_1 + \hat{\sigma}\Delta_1U^* \tilde C_1 + \hat{\sigma} \tilde C_2^* \Delta_2 \tilde C_1)\Phi^{-1} \\
    &\!= \!([-\tilde A_{21}\tilde{\Sigma}_1 - \tilde{\Sigma}_2 \tilde A_{12}^*]\tilde{\Sigma}_1 + \hat{\sigma}[\tilde A_{12}^*\tilde{\Sigma}_1 + \tilde{\Sigma}_2 \tilde A_{21}] + \hat{\sigma}\Delta_1U^* \tilde C_1 + \hat{\sigma} \tilde C_2^* \Delta_2 \tilde C_1)\Phi^{-1} \\
    &\!= \!(\!(\!-\hat{\sigma}^2 \tilde A_{21} \!\!-\!\! \tilde{\Sigma}_2 \tilde A_{12}^*\!)\tilde{\Sigma}_1 \!\!+\! \hat{\sigma}\!(\!\tilde A_{12}^*\tilde{\Sigma}_1 \!\!+\!\! \tilde{\Sigma}_2 \tilde A_{21}\!) \!\!+\! \hat{\sigma}\Delta_1U^* \tilde C_1 \!\!+\! \hat{\sigma} \tilde C_2^* \Delta_2 \tilde C_1\!)\Phi^{\!-1} \!\!\!-\!\! \tilde A_{21}\!(\tilde{\Sigma}_1^2 \!\!-\! \hat{\sigma}^2I) \Phi^{\!-1} \\
    &\!= \!(\hat{\sigma}(\tilde{\Sigma}_2 - \hat{\sigma}I) \tilde A_{21} + (\hat{\sigma}I - \tilde{\Sigma}_2) \tilde A_{12}^* \tilde{\Sigma}_1 + \hat{\sigma}\Delta_1U^* \tilde C_1 + \hat{\sigma} \tilde C_2^* \Delta_2 \tilde C_1)\Phi^{-1} - \tilde A_{21},
\end{align*}
which proves~\cref{eq.technical0}. To verify~\cref{eq.technical1}, we have
\begin{align*}
    \Phi \hat{A} \tilde{\Sigma}_1 \!+\! \tilde{\Sigma}_1 \hat{A}^* \Phi &\!=\!  (\hat{\sigma}^2 \tilde {A}_{11}^* \!\!+\! \tilde{\Sigma}_1 \tilde {A}_{11} \tilde{\Sigma}_1 \!\!-\! \hat{\sigma} \tilde {C}_1^* U \tilde {B}_1^*) \tilde{\Sigma}_1 \!\!+\! \tilde{\Sigma}_1 (\hat{\sigma}^2 \tilde {A}_{11}^* \!\!+\! \tilde{\Sigma}_1 \tilde {A}_{11} \tilde{\Sigma}_1 \!\!-\! \hat{\sigma} \tilde {C}_1^* U \tilde {B}_1^*)^* \\
    &\!=\! (\hat{\sigma}^2 \tilde {A}_{11}^* \!\!+\! \tilde{\Sigma}_1 \tilde {A}_{11} \tilde{\Sigma}_1 \!\!-\! \hat{\sigma} \tilde {C}_1^* U \tilde {B}_1^*) \tilde{\Sigma}_1 \!\!+\! \tilde{\Sigma}_1 (\hat{\sigma}^2 \tilde {A}_{11} \!\!+\! \tilde{\Sigma}_1 \tilde {A}_{11}^* \tilde{\Sigma}_1 \!\!-\! \hat{\sigma} \tilde {B}_1 U^* \tilde {C}_1) \\
    &\!=\! -(\tilde{\Sigma}_1 \tilde B_1 + \hat{\sigma} \tilde C_1^* U)(\tilde B_1^*\tilde{\Sigma}_1 + \hat{\sigma}U^* \tilde C_1) + \hat{\sigma}^2(\tilde A_{11}^* \tilde{\Sigma}_1 + \tilde{\Sigma}_1 \tilde A_{11} + \tilde C_1^* \tilde C_1) \\
    &\qquad + \tilde{\Sigma}_1 (\tilde A_{11}\tilde{\Sigma}_1 + \tilde{\Sigma}_1 \tilde A_{11}^* + \tilde B_1 \tilde B_1^*) \tilde{\Sigma}_1 + \hat{\sigma}^2 \tilde C_1^* (UU^* - I) \tilde C_1  \\
    &\!=\! -\Phi\hat{B}\hat{B}^*\Phi - \hat{\sigma}^2 \tilde C_1^* \Delta_2 \tilde C_1,
\end{align*}
where the last step follows from the definition of $\hat{B}$ and the Lyapunov equations. Multiplying $\Phi^{-1}$ on the left and the right, we obtain~\cref{eq.technical1}. Next, we have
\begin{align*}
    \tilde C_1^* \hat{C} &= \tilde C_1^*(\tilde C_1 \tilde{\Sigma}_1 + \hat{\sigma} U \tilde B_1^*) = -(\tilde A_{11}^* \tilde{\Sigma}_1 + \tilde{\Sigma}_1 \tilde A_{11}) \tilde{\Sigma}_1 + \hat{\sigma} \tilde C_1^* U \tilde B_1^* \\
    &= -\Phi \hat{A} + \hat{\sigma}^2 \tilde A_{11}^* - \tilde A_{11}^* \tilde{\Sigma}_1^2 = -\Phi \hat{A} - \tilde A_{11}^* \Phi
\end{align*}
and
\begin{align*}
    \tilde C_2^* \hat{C} &= \tilde C_2^*(\tilde C_1 \tilde{\Sigma}_1 + \hat{\sigma} U \tilde B_1^*) = (-\tilde A_{12}^* \tilde{\Sigma}_1 - \tilde{\Sigma}_2 \tilde A_{21})\tilde{\Sigma}_1 - \hat{\sigma} \tilde B_2 \tilde B_1^* + \hat{\sigma}\Delta_1\tilde B_1^* \\
    &= (-\tilde A_{12}^* \tilde{\Sigma}_1 - \tilde{\Sigma}_2 \tilde A_{21})\tilde{\Sigma}_1 + \hat{\sigma} (\tilde A_{21}\tilde{\Sigma}_1 + \tilde{\Sigma}_2 \tilde A_{12}^*) + \hat{\sigma}\Delta_1\tilde B_1^* \\
    &= -\tilde A_{12}^*\Phi - (\tilde{\Sigma}_2 - \hat{\sigma}I) \tilde A_{21} \tilde{\Sigma}_1 + \hat{\sigma} (\tilde{\Sigma}_2 - \hat{\sigma}I) \tilde A_{12}^* + \hat{\sigma}\Delta_1\tilde B_1^*,
\end{align*}
which prove~\cref{eq.technical2} and~\cref{eq.technical3}, respectively. Finally, to prove~\cref{eq.technical4}, we have
    \begin{align*}
        D_eB_e^* + C_eP_e &= \hat{\sigma} U \left[
        \begin{array}{c|c|c}
            \tilde B_1^* & \tilde B_2^* & \hat{B}^*
        \end{array}
        \right]
        +
        \left[
        \begin{array}{c|c|c}
            \tilde C_1 & \tilde C_2 & -\hat{C}
        \end{array}
        \right]
        \left[
        \begin{array}{c|c|c}
        \tilde{\Sigma}_1 & 0 & I \\
        \hline
        0 & \hat{\sigma}I & 0 \\
        \hline
        I & 0 & \tilde{\Sigma}_1\Phi^{-1}
        \end{array}
        \right] \\
        &=
        \left[
        \begin{array}{c|c|c}
            \hat{\sigma}U\tilde B_1^* + \tilde C_1\tilde{\Sigma}_1-\hat{C} & \hat{\sigma}U\tilde B_2^* + \hat{\sigma}\tilde C_2 & \hat{\sigma}U\hat{B}^* + \tilde C_1 - \hat{C}\tilde{\Sigma}_1\Phi^{-1}
        \end{array}
        \right],
    \end{align*}
    where $\hat{\sigma}U\tilde B_1^* \!+\! \tilde C_1\tilde{\Sigma}_1\!-\!\hat{C} \!=\! 0$ by definition, $ \hat{\sigma}U\tilde B_2^* \!+\! \hat{\sigma}\tilde C_2 \!=\! \hat{\sigma} (U\Delta_1^* \!+\! \Delta_2^*\tilde C_2)$ by~\cref{eq.secondU}, and
    \begin{align*}
        &\hat{\sigma}U\hat{B}^* + \tilde C_1 - \hat{C}\tilde{\Sigma}_1\Phi^{-1} = \hat{\sigma}U(\tilde B_1^*\tilde{\Sigma}_1+\hat{\sigma}U^*\tilde C_1)\Phi^{-1} + \tilde C_1 - (\tilde C_1\tilde{\Sigma}_1 + \hat{\sigma}U\tilde B_1^*)\tilde{\Sigma}_1\Phi^{-1} \\
        &\qquad= \hat{\sigma}^2 \tilde C_1 \Phi^{-1} - \hat{\sigma}^2 \Delta_2 \tilde C_1 \Phi^{-1} + \tilde C_1 - \tilde C_1\tilde{\Sigma}_1^2 \Phi^{-1} = - \hat{\sigma}^2 \Delta_2 \tilde C_1 \Phi^{-1}.
    \end{align*}
    This concludes the proof of the lemma.
\end{proof}

Given a square matrix $X$, we define its inertia, denoted by $\Iner(X)$, to be the $3$-tuple consisting of the number of eigenvalues of $X$ whose real parts are $< 0$, $= 0$, and $> 0$, respectively. 
We now prove~\Cref{thm.rankGhat}, which gives criteria for the rank of the system reduced by the modified HNA algorithm being equal to $k$.

\begin{proof}[Proof of~\Cref{thm.rankGhat}]
By~\cref{eq.technical1}, we have the equation
    \begin{equation*}
        -\tilde{\Sigma}_1^{-1} \Phi \hat{A} - \hat{A}^* \Phi\tilde{\Sigma}_1^{-1} =  \tilde{\Sigma}_1^{-1}\Phi \hat{B}\hat{B}^* \Phi\tilde{\Sigma}_1^{-1} + \hat{\sigma}^2 \tilde{\Sigma}_1^{-1}\tilde C_1^* \Delta_2 \tilde C_1\tilde{\Sigma}_1^{-1},
    \end{equation*}
    where $\tilde{\Sigma}_1^{-1}\Phi \hat{B}\hat{B}^* \Phi\tilde{\Sigma}_1^{-1}$ is positive semidefinite and 
    \[
        \big\|{ \hat{\sigma}^2 \tilde{\Sigma}_1^{-1}\tilde C_1^* \Delta_2 \tilde C_1\tilde{\Sigma}_1^{-1}}\big\| < Q_U \big\|{\tilde C_1}\big\|^2.
    \]
    Hence, setting $\alpha_0 = Q_U \big\|{\tilde C_1}\big\|^2$, we have $\tilde{\Sigma}_1^{-1}\Phi \hat{B}\hat{B}^* \Phi\tilde{\Sigma}_1^{-1} + \hat{\sigma}^2 \tilde{\Sigma}_1^{-1}\tilde C_1^* \Delta_2 \tilde C_1\tilde{\Sigma}_1^{-1} + \alpha_0 I$ is positive definite. Since
    \begin{equation*}
        \big(\!-\!\tilde{\Sigma}_1^{-1} \Phi \hat{A} + \alpha_0 I/2\big) + \big(\!-\!\hat{A}^* \Phi\tilde{\Sigma}_1^{-1} \!+ \alpha_0 I/2\big) \!=\! \tilde{\Sigma}_1^{-1}\Phi \hat{B}\hat{B}^* \Phi\tilde{\Sigma}_1^{-1} \!+ \hat{\sigma}^2 \tilde{\Sigma}_1^{-1}\tilde C_1^* \Delta_2 \tilde C_1\tilde{\Sigma}_1^{-1} \!+ \alpha_0 I,
    \end{equation*}
    by~\cite[Cor.~3]{ostrowski1962some}, 
    we have
    \begin{equation}
        \Iner\big(\hat{A}^* - \alpha_0\tilde{\Sigma}_1\Phi^{-1}/2\big) = \Iner\big(-\tilde{\Sigma}_1\Phi^{-1}\big).
    \end{equation}
    By the way we construct $\Phi$, we have $\Iner\big(-\tilde{\Sigma}_1\Phi^{-1}\big) = \Iner\left(-\Phi\right) = (k,0,K-r-k)$. Note that
    \[
        \big\|{\alpha_0\tilde{\Sigma}_1\Phi^{-1}/2}\big\| \leq \big(Q_U \big\|{\tilde C_1}\big\|^2/2\big) \big\|\tilde{\Sigma}_1\Phi^{-1}\big\| \leq Q_U \big\|{\tilde C_1}\big\|^2\delta^{-1}/2.
    \]
    Now, assume $\hat{A}^*$ and $\hat{A}^* - \alpha_0\tilde{\Sigma}_1\Phi^{-1}/2$ have different inertia, then as $\alpha$ ranges from $0$ to $\alpha_0$, by continuity, there exists an $\alpha' \in (0,\alpha_0]$ such that $\hat{A}^* - \alpha'\tilde{\Sigma}_1\Phi^{-1}/2$ contains an eigenvalue with zero real part, but this eigenvalue is then contained in
    \[
        \sigma_{\norm{\alpha'\tilde{\Sigma}_1\Phi^{-1}/2}}(\hat{A}^*) \subset \sigma_{Q_U \norm{\tilde C_1}^2\delta^{-1}/2}(\hat{A}^*),
    \]
    which is a contradiction to~\ref{item.a}. Hence, if~\ref{item.a} holds, then we have
    \[
        \Iner(\hat{A}) = \Iner(\hat{A}^* - \alpha_0\tilde{\Sigma}_1\Phi^{-1}/2) = \Iner\big(-\tilde{\Sigma}_1\Phi^{-1}\big) = (k,0,K-r-k).
    \]

Now, assume~\ref{item.a} does not hold. Then, there exists a $\lambda \in \overline{\sigma_{\rho_1}(\hat{A})} = \sigma_{\rho_1}(\hat{A}^*)$ with $\text{Re}(\lambda) = 0$. This means there exists a unit vector $v$ such that 
    \begin{equation}\label{eq.pseudoexpand1}
        \big\|{(\hat{A}^* - \lambda I)v}\big\| < \rho_1.
    \end{equation}
    Set $\xi = \hat{A}^*v - \lambda v$. Since
    \[
        \hat{A} \tilde{\Sigma}_1 \Phi^{-1} + \tilde{\Sigma}_1 \Phi^{-1} \hat{A}^* + \hat{B}\hat{B}^* = -\hat{\sigma}^2 \Phi^{-1} \tilde C_1^* \Delta_2 \tilde C_1\Phi^{-1},
    \]
    we have
    \begin{align*}
        &-\hat{\sigma}^2 v^* \Phi^{-1} \tilde C_1^* \Delta_2 \tilde C_1 \Phi^{-1} v = v^* \hat{A} \tilde{\Sigma}_1 \Phi^{-1} v + v^* \tilde{\Sigma}_1 \Phi^{-1} \hat{A}^* v + v^* \hat{B}\hat{B}^* v \\
        &= \!(-\lambda v^* \!\!+\! \xi^*) \tilde{\Sigma}_1 \Phi^{-1} \!v \!+\! v^* \tilde{\Sigma}_1 \Phi^{-1} (\lambda v \!+\! \xi) \!+\! v^*\! \hat{B}\hat{B}^* \!v = \xi^*\tilde{\Sigma}_1\Phi^{-1}v \!+\! v^* \tilde{\Sigma}_1\Phi^{-1} \xi \!+\! v^* \!\hat{B}\hat{B}^* \!v.
    \end{align*}
    This shows
    \begin{equation}\label{eq.Cvnorm}
        \begin{aligned}
            \big\|{\hat{B}^* v}\big\|^2 \!\!\leq\! \big\|{\hat{\sigma}^2 v^* \Phi^{-1} \tilde C_1^* \Delta_2 \tilde C_1 \Phi^{-1} v}\big\| \!+\! 2\big\|{\xi^*\tilde{\Sigma}_1\Phi^{-1}v}\big\| \!<\! \big\|{\tilde C_1}\big\|^2\! Q_U \delta^{-2} \!+\! 2\rho_1 \delta^{-1}.
        \end{aligned}
    \end{equation}
    Since $\tilde A_{11} + \hat{A}^* + \tilde B_1\hat{B}^* = 0$, we have
    \begin{align*}
        0 = \tilde A_{11}v + \hat{A}^*v + \tilde B_1\hat{B}^*v = \tilde A_{11}v + \xi + \lambda v + \tilde B_1\hat{B}^*v.
    \end{align*}
    By~\cref{eq.Cvnorm}, we now have
    \begin{equation}
        \begin{aligned}
            \big\|{(\tilde A_{11} + \lambda I) v}\big\| < \rho_1 + \big\|{\tilde B_1}\big\|\sqrt{\big\|{\tilde C_1}\big\|^2 Q_U \delta^{-2} + 2\rho_1 \delta^{-1}} = \rho_2.
        \end{aligned}
    \end{equation}
    Therefore, $-\lambda \in {\sigma_{\rho_2}(\tilde A_{11})}$, which proves the contrapositive of~\ref{item.b}~$\Rightarrow$~\ref{item.a}.

Finally, with the same assumptions of~\cref{eq.pseudoexpand1}, since
    \[
        \tilde A_{21} + \tilde B_2\hat{B}^* = (\hat{\sigma}(\tilde{\Sigma}_2 - \hat{\sigma}I)\tilde A_{21} + (\hat{\sigma}I - \tilde{\Sigma}_2)\tilde A_{12}^* \tilde{\Sigma}_1 + \hat{\sigma}\Delta_1U^*\tilde C_1 + \hat{\sigma} \tilde C_2^* \Delta_2 \tilde C_1)\Phi^{-1},
    \]
    we have
    \begin{equation*}
        \tilde A_{21} v = \underbrace{\big[(\hat{\sigma}(\tilde{\Sigma}_2 \!-\! \hat{\sigma}I)\tilde A_{21} \!+\! (\hat{\sigma}I \!-\! \tilde{\Sigma}_2)\tilde A_{12}^* \tilde{\Sigma}_1 \!+\! \hat{\sigma}\Delta_1U^*\tilde C_1 \!+\! \hat{\sigma} \tilde C_2^* \Delta_2 \tilde C_1)\Phi^{-1} \!-\! \tilde B_2\hat{B}^*\big]}_{\Xi} v,
    \end{equation*}
    which gives us
    \begin{equation*}
    \tilde A 
    \left[
    \begin{array}{c}
         v  \\
         \hline
         0
    \end{array}
    \right]
    =
    \left[
    \begin{array}{c}
         -\lambda v - \xi - \tilde B_1\hat{B}^* v \\
         \hline
         \Xi v
    \end{array}
    \right]
    =
    -\lambda
    \underbrace{
    \left[
    \begin{array}{c}
        v  \\
         \hline
         0
    \end{array}
    \right]}_{v'}
    +
    \underbrace{
    \left[
    \begin{array}{c}
         -\xi - \tilde B_1\hat{B}^* v \\
         \hline
         \Xi v
    \end{array}
    \right]}_{\xi'},
    \end{equation*}
    which implies $(\tilde A + \lambda I)v' = \xi'$.
    Hence, $-\lambda \in {\sigma_{\rho_3}(\tilde A)}$, where
    \begin{equation*}
        \begin{aligned}
            \rho_3 \!=\! \norm{\xi'} &\!\leq\! 2\rho_2 + \delta^{-1} \big(\epsilon\big\|{\tilde A_{21}}\big\| + \epsilon \big\|{\tilde A_{12}}\big\| + Q_E\sqrt{1\!+\!Q_U}\big\|{\tilde C_1}\big\| + \big\|{\tilde C_1}\big\|\big\|{\tilde C_2}\big\|Q_U \big).
        \end{aligned}
    \end{equation*}
    This shows the contrapositive of~\ref{item.c}~$\Rightarrow$~\ref{item.a}.
\end{proof}

\section{Proof of~\Cref{thm.error}}\label{sec:proof2}

In this section, we prove~\Cref{thm.error} on the Hankel approximation error of the modified HNA algorithm. We will continue to use the notations introduced at the beginning of~\cref{sec:proof1}.

\begin{proof}[Proof of~\Cref{thm.error}]
Set $\hat{\sigma} = \tilde{\sigma}_{k+1}$ and define $E(z) = D_e + C_e (zI - A_e)^{-1} B_e$. Then, we have
    \begin{equation}\label{eq.auxiliary}
        E(z) = (\tilde D-\hat{D}) + \tilde C(zI - \tilde A)^{-1}\tilde B - \hat{C}(zI - \hat{A})^{-1}\hat{B} = \tilde{G}(z) - \hat{G}(z).
    \end{equation}
    Hence, for any fixed $z$, we have
    \begin{equation}\label{eq.singval}
        \big\|{\hat{G}(z) \!-\! \tilde{G}(z)}\big\| = \big\|{(\tilde{G}(z) \!-\! \hat{G}(z))(\tilde{G}(z) \!-\! \hat{G}(z))^*}\big\|^{1/2} = \norm{E(z)E(z)^*}^{1/2}.
    \end{equation}
    Let $z \in \C$, $\text{Re}(z) = 0$ be given. We have
    \begin{equation}\label{eq.EzEminusz}
    \begin{aligned}
        &E(z)E(z)^* = (D_e + C_e(zI - A_e)^{-1} B_e)(D^*_e + B^*_e(-zI - A_e^*)^{-1} C^*_e).
    \end{aligned}
    \end{equation}
    Note from the definition of $D_e$ that
    \begin{equation}\label{eq.firstterm}
        D_eD_e^* = (\tilde D-\hat{D})(\tilde D-\hat{D})^* = \hat{\sigma}^2 UU^* = \hat{\sigma}^2 I - \hat{\sigma}^2 \Delta_2.
    \end{equation}
    By~\cref{eq.technical4}, we have
    \begin{equation}\label{eq.secondterm}
        \begin{aligned}
        D_e B^*_e(-zI - A_e^*)^{-1} C^*_e &= (\tilde{\Delta} - C_eP_e) (-zI - A_e^*)^{-1} C^*_e,
    \end{aligned}
    \end{equation}
    Next, by~\cref{eq.perturbedlyapunov}, we have
    \begin{equation}\label{eq.thirdterm}
        \begin{aligned}
        &C_e(zI - A_e)^{-1} B_eB_e^* (-zI - A_e^*)^{-1} C_e^* \\
        & \!=\! C_e(zI \!-\! A_e)^{-1} (\Omega \!+\! (zI - A_e)P_e \!+\! P_e(-\!zI \!-\! A_e^*)) (-\!zI \!-\! A_e^*)^{-1} C_e^* \\
        & \!=\! C_eP_e(-\!zI\!-\!A_e^*)^{-\!1}C_e^* \!+\! C_e(zI \!-\! A_e)^{-\!1}\!P_eC_e^* \!+\! C_e(zI \!-\! A_e)^{-\!1} \Omega (-\!zI\!-\!A_e^*)^{-\!1}C_e^*.
        \end{aligned}
    \end{equation}
    Combining~\cref{eq.EzEminusz},~(\ref{eq.firstterm}),~(\ref{eq.secondterm}), and~(\ref{eq.thirdterm}), we have
    \begin{equation}\label{eq.Efinal}
        \begin{aligned}
            E(z)E(z)^* &= \hat{\sigma}^2 I - \hat{\sigma}^2 \Delta_2 + \tilde{\Delta}(-zI-A_e^*)^{-1}C_e^* + C_e(zI-A_e)^{-1} \tilde{\Delta}^* \\
            &\qquad+ C_e(zI - A_e)^{-1} \Omega (-zI-A_e^*)^{-1}C_e^*.
        \end{aligned}
    \end{equation}
    We prove the theorem by controlling each term added to $\hat{\sigma}^2 I$. By definition, we have 
    \begin{equation}\label{eq.e1}
        e_1 := \norm{\hat{\sigma}^2 \Delta_2} \leq \hat{\sigma}^2 Q_U.
    \end{equation}
    To control the third and the fourth term, we note that
    \begin{align*}
        &\tilde{\Delta}(-zI\!-\!A_e^*)^{-1}\!C_e^* \!=\!\! \left[
        \begin{array}{c|c|c}
            \!\!\!0\!\! & \!\!\hat{\sigma} (U\Delta_1^* \!+\! \Delta_2^*\tilde C_2)\!\!\! & \!\!-\hat{\sigma}^2 \Delta_2 \tilde C_1 \Phi^{-1}\!\!\!\!
        \end{array}
        \right]\!\!
        \left[
        \begin{array}{c|c}
        \!\!\!(zI\!-\!\tilde A)^{-1} \!\!\!\!&\! 0\! \\
        \hline
        \!0 \!&\! \!(zI \!-\! \hat{A})^{-1}\!\!\!\!
        \end{array}
        \right]\!\!\!
        \left[
        \begin{array}{c}
            \!\!\!\tilde C_1^*\!\!\! \\
            \hline
            \!\!\!\tilde C_2^*\!\!\! \\
            \hline
            \!\!\!-\hat{C}^*\!\!\!
        \end{array}
        \right] \\
        &\qquad=\! \left[
        \begin{array}{c|c}
            \!0 & \hat{\sigma} (U\Delta_1^* \!+\! \Delta_2^*\tilde C_2)
        \end{array}
        \right]
        (zI-\tilde A)^{-1} \tilde C^* + \hat{\sigma}^2 \Delta_2 \tilde C_1 \Phi^{-1} (zI \!-\! \hat{A})^{-1} \hat{C}^*,
    \end{align*}
    where the norm of twice the first term can be bounded as
    \[
        e_2 := 2\, \big\|{\!\left[
        \begin{array}{c|c}
            \!\!0 & \hat{\sigma} (U\Delta_1^* \!+\! \Delta_2^*\tilde C_2)\!\!\!
        \end{array}
        \right]
        (zI-\tilde A)^{-1} \tilde C^*}\big\| \leq 2\hat{\sigma} \big(\sqrt{1\!+\!Q_U}Q_E \!+\! Q_U\big\|{\tilde C_2}\big\|\big)\rho^{-1}\big\|{\tilde C}\big\|
    \]
    and the norm of twice the second term can be bounded as
    \[
        e_3 := 2\, \big\|{\hat{\sigma}^2 \Delta_2 \tilde C_1 \Phi^{-1} (zI \!-\! \hat{A})^{-1} \hat{C}^*}\big\| \leq 2\hat{\sigma}^2 Q_U \big\|{\tilde C_1}\big\| \norm{X(z)^{-1}} \big\| \hat{C}\big\|,
    \]
    where $\big\| \hat{C}\big\| \leq \tilde \sigma_1 \big\|{\tilde C_1}\big\| \!+\! \hat{\sigma}\sqrt{1\!+\!Q_U}\big\|{\tilde B_1}\big\|$. Therefore, we have
    \begin{equation}\label{eq.e23}
        \begin{aligned}
            &\big\|{\tilde{\Delta}(-zI-A_e^*)^{-1}C_e^* + C_e(zI-A_e)^{-1} \tilde{\Delta}^*}\big\| \leq e_2 + e_3.
        \end{aligned}
    \end{equation}
    To control the last term in~\cref{eq.Efinal}, we note that
    \begin{align*}
        &C_e(zI - A_e)^{-1} \Omega (-zI-A_e^*)^{-1}C_e^* \\
        &= \tilde C(zI-\tilde A)^{-1}\Omega_1(-zI-\tilde A^*)^{-1}\tilde C^* - \tilde C(zI-\tilde A)^{-1}\Omega_2(-zI-\hat{A}^*)^{-1}\hat{C}^* \\
        &\qquad - \hat{C}(zI-\hat{A})^{-1}\Omega^*_2(-zI-\tilde{A}^*)^{-1}\tilde{C}^* + \hat{C}(zI-\hat{A})^{-1}\Omega_3(-zI-\hat{A}^*)^{-1}\hat{C}^*.
    \end{align*}
    We control each of these terms separately. By~\cref{eq.lyapunov} and~\cref{eq.perturbedlyapunov}, we have
    \[
        \Omega_1 = \left[\!
            \begin{array}{c|c}
                0 &\!\! \hat{\sigma}\tilde A_{12} \!+\! \tilde{\Sigma}_1\tilde A_{21}^* \!-\! \tilde A_{12}\tilde{\Sigma}_2 \!-\! \tilde{\Sigma}_1\tilde A_{21}^* \\
                \hline
                \hat{\sigma}\tilde A_{12}^* \!+\! \tilde A_{21}\tilde{\Sigma}_1 \!-\! \tilde{\Sigma}_2\tilde A_{12}^* \!-\! \tilde A_{21}\tilde{\Sigma}_1 &\!\! \hat{\sigma}\tilde A_{22}^* \!+\! \hat{\sigma}\tilde A_{22} \!-\! \tilde A_{22}\tilde{\Sigma}_2 \!-\! \tilde{\Sigma}_2\tilde A_{22}^*
            \end{array}
            \!\right],
    \]
    whose norm is $\leq 2\epsilon\big(\big\|{\tilde A_{12}}\big\| + \big\|{\tilde A_{21}}\big\| + \big\|{\tilde A_{22}}\big\| \big)$. Hence,
    \begin{equation}\label{eq.e4}
        e_4 := \big\|{\tilde C(zI\!-\!\tilde A)^{-1}\Omega_1(-zI\!-\!\tilde A^*)^{-1}\tilde C^*}\big\| \!\leq\! 2\epsilon\rho^{-2}\big\|{\tilde C}\big\|^2 \big(\big\|{\tilde A_{12}}\big\| + \big\|{\tilde A_{21}}\big\| + \big\|{\tilde A_{22}}\big\|\big).
    \end{equation}
    Next, by~\cref{eq.technical-1} and~(\ref{eq.technical0}), we have
    \[
        \Omega_2 = \left[
        \begin{array}{c}
            0 \\
            \hline
            \hat{\sigma}(\tilde{\Sigma}_2 \!-\! \hat{\sigma}I)\tilde A_{21} \!+\! (\hat{\sigma}I \!-\! \tilde{\Sigma}_2)\tilde A_{12}^* \tilde{\Sigma}_1 \!+\! \hat{\sigma}\Delta_1U^*\tilde C_1 \!+\! \hat{\sigma} \tilde C_2^* \Delta_2 \tilde C_1
        \end{array}
        \right]\Phi^{-1}.
    \]
    Hence,
    \begin{equation}\label{eq.e5}
        \begin{aligned}
        &e_5 := 2\big\|{\tilde C(zI-\tilde A)^{-1}\Omega_2(-zI-\tilde A^*)^{-1} \hat C^*}\big\| \\
        &\leq \!\big\|{\tilde C}\big\| \rho^{-1} \big\|{\hat{\sigma}(\tilde{\Sigma}_2 \!-\! \hat{\sigma}I)\tilde A_{21} \!+\! (\hat{\sigma}I \!-\! \tilde{\Sigma}_2)\tilde A_{12}^* \tilde{\Sigma}_1 \!+\! \hat{\sigma}\Delta_1U^*\tilde C_1 \!+\! \hat{\sigma} \tilde C_2^* \Delta_2 \tilde C_1}\big\| \norm{X(z)^{-1}} \big\|{\hat{C}}\big\| \\
        &\leq \!\big\|{\tilde C}\big\| \rho^{-1} \!\big(\epsilon\hat{\sigma}\big\|{\tilde A_{21}}\big\| \!+\! \epsilon\tilde \sigma_1 \big\|{\tilde A_{12}}\big\| \!+\! \hat{\sigma}Q_E\sqrt{1\!+\!Q_U} \big\|{\tilde C_1}\big\| \!+\! \hat{\sigma}Q_U \!\big\|{\tilde C_1}\big\| \big\|{\tilde C_2}\big\|\big) \!\!\norm{X(z)^{-1}} \!\big\|{\hat{C}}\big\|.
        \end{aligned}
    \end{equation}
    Finally, by~\cref{eq.technical1}, we have $\Omega_3 = -\hat{\sigma}^2 \Phi^{-1} \tilde C_1^* \Delta_2 \tilde C_1\Phi^{-1}$. Hence, we have
    \begin{equation}\label{eq.e6}
        \begin{aligned}
            e_6 := \big\|{\hat{C}(zI-\hat{A})^{-1}\Omega_3(-zI-\hat{A}^*)^{-1}\hat{C}^*}\big\| \leq \hat{\sigma}^2 \big\|\hat{C}\big\|^2 \norm{X(z)^{-1}}^2 \big\|{\tilde C_1}\big\|^2Q_U.
        \end{aligned}
    \end{equation}
Combining~\cref{eq.singval},~\cref{eq.Efinal},~(\ref{eq.e1}),~(\ref{eq.e23}),~(\ref{eq.e4}),~(\ref{eq.e5}) and~(\ref{eq.e6}) and using the triangle inequality, we have $\big\|{\hat{G}(z) \!-\! \tilde{G}(z)}\big\|^2 \leq \hat{\sigma}^2 + \sum_{j=1}^6 e_j$, where for for some $C > 0$ that is bounded by a fixed product of $\big\|{\tilde{A}}\big\|, \big\|{\tilde{B}}\big\|, \big\|{\tilde{C}}\big\|$, $\tilde{\sigma}_1$, $1+Q_E$, and $1+Q_U$, we have
\[
	e_1 \!+\! e_2 \!+\! e_3 \!+\! e_5 \leq C\rho^{-1} \norm{X(z)}^{-1} (\epsilon \!+\! Q_E \!+\! Q_U), \quad e_6 \leq C\norm{X(z)}^{-2}Q_U, \quad e_4 \leq C\rho^{-2}\epsilon.
\]
This completes the proof.
\end{proof}

\section{Proof of~\Cref{thm.QEQU}}\label{sec:proof3}

In this section, we prove the upper bounds on $Q_E$ and $Q_U$ given our construction of $U$ in~\cref{sec:stabilizetheta}. Recall that~\Cref{thm.QEQU} suggests that $Q_E$ and $Q_U$ are on the order of $\epsilon$ under mild assumptions.

\begin{proof}[Proof of~\Cref{thm.QEQU}]
Let $\nu = s_{q} - s_{q+1}$. Let $\tilde B_2 = U_B S_B W_B^*$ be an SVD of $\tilde B_2$ and define $U^{(1)}_B \in \C^{r \times q}, U^{(2)}_B \in \C^{r \times (r-q)}, S^{(1)}_B \in \C^{q \times q}, S^{(2)}_B \in \C^{(r-q) \times (m-q)}, W^{(1)}_B \in \C^{m \times q}, W^{(2)}_B \in \C^{m \times (m-q)}$ so that
\[
     U_B =
    \left[
    \begin{array}{c|c}
         U_B^{(1)} & U_B^{(2)}
    \end{array}
    \right], \quad S_B = 
    \left[
    \begin{array}{c|c}
         S_B^{(1)} & 0 \\
         \hline
         0 & S_B^{(2)}
    \end{array}
    \right], \quad W_B =  \left[
    \begin{array}{c|c}
         W_B^{(1)} & W_B^{(2)}
    \end{array}
    \right].
\]
We have that $U_B S_B^2 U_B^*$ is an SVD of $\tilde B_2 \tilde B_2^*$ and $U_C S_C^2 U_C^*$ is an SVD of $\tilde C_2^* \tilde C_2$. Since by Lyapunov equations~\cref{eq.lyapunov}, we have
    \begin{align*}
        &-\tilde B_2\tilde B_2^* + \tilde C_2^*\tilde C_2 = \tilde A_{22} \tilde{\Sigma}_2 + \tilde{\Sigma}_2 \tilde A_{22}^* - \tilde A_{22}^* \tilde{\Sigma}_2 - \tilde{\Sigma}_2 \tilde A_{22} \\
        &\quad= \tilde A_{22} (\tilde{\Sigma}_2 \!-\! \hat{\sigma}I \!+\! \hat{\sigma} I) + (\tilde{\Sigma}_2 \!-\! \hat{\sigma}I \!+\! \hat{\sigma} I) \tilde A_{22}^* - \tilde A_{22}^* (\tilde{\Sigma}_2 \!-\! \hat{\sigma}I \!+\! \hat{\sigma} I) - (\tilde{\Sigma}_2 \!-\! \hat{\sigma}I \!+\! \hat{\sigma} I) \tilde A_{22} \\
        &\quad = \tilde A_{22} (\tilde{\Sigma}_2 - \hat{\sigma}I) + (\tilde{\Sigma}_2 - \hat{\sigma}I) \tilde A_{22}^* - \tilde A_{22}^* (\tilde{\Sigma}_2 - \hat{\sigma}I) - (\tilde{\Sigma}_2 - \hat{\sigma}I) \tilde A_{22},
    \end{align*}
    we know that
    \[
        \big\|{\tilde B_2 \tilde B_2^* - \tilde C_2^* \tilde C_2}\big\|_F \leq 4\epsilon \big\|{\tilde A_{22}}\big\|_F
    \]
    and $\big\|{\tilde B_2 \tilde B_2^* - \tilde C_2^* \tilde C_2}\big\|_F = 0$ when $\tilde A_{22}$ is Hermitian. Then, by Weyl's inequality and the Davis--Kahan Theorem~\cite{davis1970rotation}, we have 
    \[
        \big\|{U^{(1)}_B(U^{(1)}_B)^* - U^{(1)}_C (U^{(1)}_C)^*}\big\|_F \leq \frac{4\sqrt{2}\epsilon\big\|{\tilde A_{22}}\big\|_F}{\nu - 4\epsilon\big\|{\tilde A_{22}}\big\|_F}
    \]
    and $\big\|{U^{(1)}_B(U^{(1)}_B)^* - U^{(1)}_C (U^{(1)}_C)^*}\big\|_F = 0$ when $\tilde A_{22}$ is Hermitian. To prove~\ref{item.lem1a}, we have
    \begin{equation}\label{eq.BCdiffexpress}
    \begin{aligned}
        \tilde C_2^* U = U_CS_CV_C^*V_CV_B^* = U_CS_CV_B^* = U_C^{(1)} S_C^{(1)}(V_B^{(1)})^* + U_C^{(2)} S_C^{(2)} (V_B^{(2)})^*
    \end{aligned}
    \end{equation}
    By the definition of $V_B^{(1)}$ (see~\cref{eq.constructVB}), we have
    \begin{equation*}
    	\begin{aligned}
		U_C^{(1)} S_C^{(1)}(V_B^{(1)})^* \!\!=\! -U_C^{(1)} (U_C^{(1)})^* \tilde B_2 \!=\! -U_B^{(1)} (U_B^{(1)})^* \! \tilde B_2 \!+\! (U_B^{(1)} (U_B^{(1)})^* \!\!-\! U_C^{(1)} (U_C^{(1)})^*) \tilde B_2,
	\end{aligned}
    \end{equation*}
    where
    \begin{equation*}
    	\begin{aligned}
		&-\!U_B^{(1)} (U_B^{(1)})^* \tilde B_2 \!=\! -U_B^{(1)} [(U_B^{(1)})^* U_B^{(1)}] S_B^{(1)} (W_B^{(1)})^* \!-\! U_B^{(1)} \big[(U_B^{(1)})^* U_B^{(2)}\big] S_B^{(2)} (W_B^{(2)})^* \\
		&=\! -U_B^{(1)} S_B^{(1)} (W_B^{(1)})^* \!-\! U_B^{(2)}S_B^{(2)}(W_B^{(2)})^* \!+\! U_B^{(2)}S_B^{(2)}(W_B^{(2)})^* \!=\! -\tilde{B}_2 \!+\! U_B^{(2)}S_B^{(2)}(W_B^{(2)})^*
	\end{aligned}
    \end{equation*}
    since $(U_B^{(1)})^* U_B^{(1)} = I$ and $(U_B^{(1)})^* U_B^{(2)} = 0$. Thus,~\cref{eq.BCdiffexpress} shows that
    \begin{equation}\label{eq.BCdiff0}
    	\tilde C_2^* U \!=\! -\tilde B_2 \!+\! U_B^{(2)}S_B^{(2)}(W_B^{(2)})^* \!+\! (U_B^{(1)} (U_B^{(1)})^* \!-\! U_C^{(1)} (U_C^{(1)})^*) \tilde B_2 \!+\! U_C^{(2)} S_C^{(2)} (V_B^{(2)})^*.
    \end{equation}
    When $q = r$, we have $U^{(1)}_C(U^{(1)}_C)^* = I$ and $S_C^{(2)} = 0$ so that $\tilde C_2^*U = -\tilde B_2$; otherwise,
    \begin{equation}\label{eq.BCdiff1}
        \big\|{U_C^{(2)} S_C^{(2)} (V_B^{(2)})^*}\big\| \leq \gamma, \qquad \big\|{U_B^{(2)}S_B^{(2)}(W_B^{(2)})^*}\big\| \leq \sqrt{\gamma^2 + 4\epsilon\big\|{\tilde A_{22}}\big\|_F}.
    \end{equation}
    Moreover, we have
    \begin{equation}\label{eq.BCdiff2}
        \big\|{\big(U_B^{(1)} (U_B^{(1)})^* - U_C^{(1)} (U_C^{(1)})^*\big) \tilde B_2}\big\| \leq \frac{4\sqrt{2}\epsilon\big\|{\tilde A_{22}}\big\|_F}{\nu - 4\epsilon\big\|{\tilde A_{22}}\big\|_F} \big\|{\tilde B_2}\big\|
    \end{equation}
    and is equal to zero when $A_{22}$ is Hermitian. Combining~\cref{eq.BCdiff0},~\cref{eq.BCdiff1} and~\cref{eq.BCdiff2}, we proved part~\ref{item.lem1a}.

    Next, we prove part~\ref{item.lem1b}. Since $V_C$ is unitary, we have
    \[
        \norm{I - UU^*} = \norm{V_C^*(I - UU^*)V_C} = \norm{I - V_B^*V_B}.
    \]
    By our construction, we can write $V_B^*V_B$ as
    \begin{equation*}
        V_B^*V_B = 
        \left[
        \begin{array}{c}
            (V_B^{(1)})^* \\
            \hline
            (V_B^{(2)})^*
        \end{array}
        \right]
        \left[
        \begin{array}{c|c}
            V_B^{(1)} & V_B^{(2)}
        \end{array}
        \right]
        =
        \left[
        \begin{array}{c|c}
            (V_B^{(1)})^* V_B^{(1)} & 0 \\
            \hline
            0 & I
        \end{array}
        \right],
    \end{equation*}
    which shows 
    \[
        \norm{I - V_B^*V_B} = \big\|{I - (V_B^{(1)})^* V_B^{(1)}}\big\|.
    \]
    Now, we have
    \begin{align*}
        &(V_B^{(1)})^*V_B^{(1)} = (S_C^{(1)})^{-1} (U_C^{(1)})^* \tilde B_2 \tilde B_2^* U_C^{(1)} (S_C^{(1)})^{-1} \\
        &\quad= (S_C^{(1)})^{-1} (U_C^{(1)})^* \tilde C_2^* \tilde C_2 U_C^{(1)} (S_C^{(1)})^{-1} \!+\! (S_C^{(1)})^{-1} (U_C^{(1)})^* (\tilde B_2 \tilde B_2^* \!-\! \tilde C_2^* \tilde C_2) U_C^{(1)} (S_C^{(1)})^{-1} \\
        &\quad= (S_C^{(1)})^{-1} (U_C^{(1)})^* U_CS_CV_C^* V_CS_C^*U_C^* U_C^{(1)} (S_C^{(1)})^{-1} \\
        &\qquad+ (S_C^{(1)})^{-1} (U_C^{(1)})^* (\tilde B_2 \tilde B_2^* - \tilde C_2^* \tilde C_2) U_C^{(1)} (S_C^{(1)})^{-1}.
    \end{align*}
    The first term in the last expression can be expanded as
    \begin{align*}
        &(S_C^{(1)})^{-1} (U_C^{(1)})^* U_CS_CV_C^* V_CS_C^*U_C^* U_C^{(1)} (S_C^{(1)})^{-1} \\
        &= (S_C^{(1)})^{-1} (U_C^{(1)})^* U_CS_CS_C^*U_C^* U_C^{(1)} (S_C^{(1)})^{-1} \!=\! (S_C^{(1)})^{-1} \!\left[\begin{array}{c|c}
            \!\! I\! & \!0\!\!
        \end{array}\right]\!
        S_CS_C^*
        \left[\begin{array}{c}
             \!I\! \\
             \hline
             \!0\!
        \end{array}\right]\!\!
        (S_C^{(1)})^{-1} \!=\! I.
    \end{align*}
    The residual term can be bounded by
    \begin{align*}
        \norm{I - UU^*} \leq \big\|{(S_C^{(1)})^{-1} (U_C^{(1)})^* (\tilde B_2 \tilde B_2^* - \tilde C_2^* \tilde C_2) U_C^{(1)} (S_C^{(1)})^{-1}}\big\| \leq 4s_q^{-2} \epsilon \big\|{\tilde A_{22}}\big\|
    \end{align*}
    and is zero when $\tilde A_{22}$ is Hermitian. This finishes the proof.
\end{proof}

\section*{Acknowledgments}
We want to thank Serkan Gugercin for discussions about this manuscript. 

\bibliographystyle{siamplain}
\bibliography{references}
\end{document}